\journal{}
\newtheorem{theorem}{Theorem}[section]
\newtheorem{definition}[theorem]{Definition}
\newtheorem{lemma}[theorem]{Lemma}
\newtheorem{remark}[theorem]{Remark}
\newproof{proof}{\it Proof}
\begin{document}

\begin{frontmatter}



\title{Analytic operator-valued generalized Feynman  integral on function space}

\author{Jae Gil Choi}
\ead{jgchoi@dankook.ac.kr}

\address{School of General Education,  
                 Dankook University,
                 Cheonan 31116, 
                 Republic of Korea}


\begin{abstract}
In this paper  an  analytic operator-valued 
generalized Feynman integral  was studied on a very general Wiener  space $C_{a,b}[0,T]$.
The general Wiener  space $C_{a,b}[0,T]$ is a function space which is induced by the 
generalized Brownian motion process
associated with continuous functions $a$ and $b$.
The structure of the analytic operator-valued generalized Feynman integral is suggested and 
the existence of the analytic operator-valued generalized Feynman integral is
investigated  as an operator from $L^1(\mathbb R, \nu_{\delta,a})$ to $L^{\infty}(\mathbb R)$
where $\nu_{\delta,a}$ is a $\sigma$-finite measure on $\mathbb R$
given by  
\[
d\nu_{\delta,a}=\exp\{\delta \mathrm{Var}(a)u^2\} du, 
\]
where  $\delta>0$ and $\mathrm{Var}(a)$ denotes the total variation of the
mean function $a$ of the  generalized Brownian motion process.
It turns out in this paper that
the analytic operator-valued generalized Feynman  integrals
of functionals defined by the stochastic Fourier--Stieltjes transform 
of complex measures on the infinite dimensional Hilbert space 
$C_{a,b}'[0,T]$ are elements of the linear space 
\[
\bigcap_{\delta>0} \mathcal L( L^1(\mathbb R,\nu_{\delta,a}),L^{\infty}(\mathbb R)).
\]
\end{abstract}



\begin{keyword}
generalized Brownian motion process \sep 
analytic operator-valued generalized function space integral \sep 
analytic operator-valued generalized Feynman integral 

\vspace{.3cm}

\MSC[2010]
28C20 \sep 
46G12 \sep 
46B09      
\end{keyword}

\end{frontmatter}



\setcounter{equation}{0}
\section{Introduction}\label{sec:1}

\par
 Before giving a basic survey and a motivation for our topic we fix some notation.
Let $\mathbb C$, $\mathbb C_+$ and  $\widetilde{\mathbb{C}}_+$  denote 
the  set of  complex numbers,   complex numbers with positive real part
and nonzero complex numbers with nonnegative real part, respectively.
For all $\lambda \in \widetilde{\mathbb{C}}_+$, $\lambda^{1/2}
\equiv\sqrt{\lambda}$ (or $\lambda^{-1/2}$)
is always chosen to have positive real part.
Furthermore, let $C[0,T]$ denote  the space of  
real-valued continuous  functions  $x$ on $[0,T]$ and
let $C_0[0,T]$ denote those $x$ in $C[0,T]$ such that $x(0)=0$. 
The function space $C_0[0,T]$  is referred to as one-parameter Wiener space, 
and we let $m_w$ denote Wiener measure.
Given   two Banach spaces $X$ and $Y$,
let  $\mathcal L(X,Y)$ denote the space of continuous linear operators
from $X$ to $Y$.

Let $F$ be a $\mathbb C$-valued measurable functional on $C[0,T]$.
For $\lambda>0$, $\psi\in L^2(\mathbb R)$, and $\xi\in \mathbb R$, consider
the Wiener integral
\begin{equation}\label{eq:intro}
(I_{\lambda}(F)\psi)(\xi)\equiv \int_{C_0[0,T]}
F\big(\lambda^{-1/2}x+\xi\big)\psi\big(\lambda^{-1/2}x(T)+\xi\big)dm_w(x).
\end{equation}
In the application of the Feynman integral to quantum theory,
the function $\psi$ in \eqref{eq:intro} corresponds to the initial condition
associated with Schr\"odinger equation.

In \cite{CS68}, Cameron and Storvick considered the following natural  and  interesting questions.
Under what conditions on $F$
will the linear operator $I_{\lambda}(F)$ given by \eqref{eq:intro}
be an element of  $\mathcal L(L^2(\mathbb R),L^{2}(\mathbb R))$? If so,
does the operator valued function $\lambda \to I_{\lambda}(F)$ have an analytic extension,
write $I_{\lambda}^{\mathrm{an} }(F)$ (it is called the analytic operator-valued Wiener integral of $F$ with
parameter $\lambda$), to $\mathbb C_+$?
If so, for each nonzero real number $q$, does the limit
\[
J_{q}^{\mathrm{an} }(F) 
\equiv \lim_{\substack{\lambda\to-iq\\\lambda\in \mathbb C_+}}
I_{\lambda}^{\mathrm{an}}(F)
\]
exist in some topological (normed) structure? The functional 
$J_{q}^{\mathrm{an}}(F)$ (if it exists) is called the  analytic operator-valued 
Feynman  integral of $F$ with parameter $q$.

Cameron and Storvick in \cite{CS68} introduced 
the analytic operator-valued function space 
``Feynman integral'' $J_{q}^{\mathrm{an}}(F)$, 
which mapped an $L^2(\mathbb R)$ function 
$\psi$ into an $L^2(\mathbb R)$ function $J_{q}^{\mathrm{an}}(F)\psi$. 
In \cite{CS68} and several subsequent papers 
\cite{CS70,CS73-1,JS70-1,JS70-2,JS71,JS73-1,JS73-2,JS74-1,JS74-2,JS75-1}, 
the existence of this integral as an element of $\mathcal L(L^2(\mathbb R),L^2(\mathbb R))$
was established for various functionals.
Next, the existence of the integral  as an element of $\mathcal L(L^1(\mathbb R),L^{\infty}(\mathbb R))$  
was  established  in \cite{CS73-2,CS73-3,CPS90,JS75-2}.
Finally,  the $L_p \to L_{p'}$ theory 
($1< p \le 2$ and $1/p+1/p'=1$)  
was developed   as an element of 
$\mathcal L(L^p(\mathbb R), L^{p'}(\mathbb R))$ in  \cite{JS76}. 
 
The Wiener space $C_0[0,T]$ can be considered as the 
space of sample paths of standard Brownian motion process (SBMP).
Thus, in various Feynman integration theories,  
the integrand $F$ of the   Feynman integral \eqref{eq:intro}
is a functional of the SBMP,
see \cite{CS68,CS70,CS73-1,CS73-2,CS73-3,CPS90,JS70-1,JS70-2,JS71,JS73-1,JS73-2,JS74-1,JS74-2,JS75-1,JS75-2,JS76}.

Let $D=[0,T]$ and 
let $(\Omega, {\mathcal F}, P)$ be a probability space.
By the definition, a generalized Brownian motion process (GBMP) on $D\times \Omega$ 
is a Gaussian  process $Y \equiv\{Y_t\}_{t\in D}$
such that $Y_0=0$ almost  surely
and for any $0\leq s< t\leq T$,
\[
 Y_t -Y_s \sim N \big(a(t)-a(s), \,b(t)-b(s)\big),
\]
where  $N(m, \sigma^2)$ denotes the normal distribution
with mean $m$ and variance $\sigma^2$, 
$a(t)$ is a continuous real-valued function on  $[0,T]$
and $b(t)$ is a increasing  continuous  real-valued function on $[0,T]$.
Thus a GBMP is determined by the continuous functions $a(t)$ and $b(t)$.
The function space $C_{a,b}[0,T]$, induced by GBMP, was introduced by 
Yeh \cite{Yeh71,Yeh73} and was used extensively in 
\cite{CC16,CCK15,CCL09,CCS03,CS03,CCC13,CS20}.
The function space $C_{a,b}[0,T]$ used in
\cite{CC16,CCK15,CCL09,CCS03,CS03,CCC13,CS20}
can be considered as the space of sample paths of the GBMP.
The generalized Feynman integral studied in 
\cite{CC16,CCK15,CCS03,CS03}
are scalar-valued. 
In this paper, \textit{the analytic operator-valued    generalized Feynman integral} (AOVG`Feynman'I) of functionals $F$ on the general Wiener  space $C_{a,b}[0,T]$ is 
investigated as an  element of $\mathcal L(L^1(\mathbb R,\nu_{\delta,a}),  L^{\infty}(\mathbb R))$,
where $\nu_{\delta,a}$ is a  measure on $\mathbb R$
given by   
\[
d\nu_{\delta,a}=\exp\{\delta \mathrm{Var}(a)u^2\} du,
\]
and where  $\delta>0$ and $\mathrm{Var}(a)$ denotes the total variation of the
mean function $a$ of the  GBMP.
It turns out in this paper that
the AOVG`Feynman'Is
of functionals defined by the stochastic Fourier--Stieltjes transform 
of complex measures on the infinite dimensional Hilbert space 
$C_{a,b}'[0,T]$, the space of absolutely continuous functions in $C_{a,b}[0,T]$, 
are elements of the linear space 
\[
\bigcap_{\delta>0} \mathcal L( L^1(\mathbb R,\nu_{\delta,a}),L^{\infty}(\mathbb R)).
\]

Note that when $a(t)\equiv 0$ and $b(t)=t$, 
the GBMP is an  SBMP, and so 
the function space $C_{a,b}[0,T]$ reduces to 
the classical Wiener space $C_0[0,T]$.
But we are obliged to point out that an SBMP  used in
\cite{CS68,CS70,CS73-1,CS73-2,CS73-3,CPS90,JS70-1,JS70-2,JS71,JS73-1,JS73-2,JS74-1,JS74-2,JS75-1,JS75-2,JS76} 
is stationary in time  and is free of drift.
While, the  GBMP used in this paper, as well as 
in \cite{CC16,CCK15,CCL09,CCS03,CS03,CCC13,CS20},
is nonstationary in time and is subject to a  drift $a(t)$.
It turns out, as noted in Remark \ref{re:point} below,
that including a drift term $a(t)$ makes establishing the existence  of 
\textit{the analytic operator-valued generalized  function space integral} (AOVGFSI)
and  AOVG`Feynman'I  of functionals on $C_{a,b} [0,T]$ very difficult.

\par
The results in this paper are quite a lot 
more complicated because the GBMP is neither stationary nor centered.
 
\setcounter{equation}{0}
\section{Preliminaries}\label{sec:2}

In this section,
we briefly list some of the preliminaries from \cite{CC16,CCK15,CCS03,CS03} 
that we need to establish our results in next sections;
for more details, see  \cite{CC16,CCK15,CCS03,CS03}. 

\par
 \par 
Let $(C_{a,b}[0,T],\mathcal B(C_{a,b}[0,T]),\mu )$ 
denote  the function space induced by the GBMP $Y$ determined by 
continuous functions  $a(t)$ and $b(t)$,  
where $\mathcal B(C_{a,b}[0,T])$ is the Borel $\sigma$-field
induced by the $\sup$-norm, see \cite{Yeh71,Yeh73}.
We assume in this paper that 
$a(t)$ is an absolutely continuous real-valued  function 
on $[0,T]$   with $a(0)=0$, $a'(t) \in L^2[0,T]$,  
and $b(t)$ is an increasing, 
continuously differentiable real-valued function
with $b(0)=0 $ and $b'(t) >0$ for each $t \in [0,T]$.
We  complete this function space
to obtain  the complete probability  measure space $(C_{a,b}[0,T],\mathcal W(C_{a,b}[0,T]),\mu)$
where $\mathcal W(C_{a,b}[0,T])$ is the set of all $\mu$-Carath\'eodory 
measurable subsets of $C_{a,b}[0,T]$.

We can consider the coordinate process
$X:[0,T] \times C_{a,b}[0,T]\to\mathbb R$ given by $X(t,x)=x(t)$
which is a continuous process.
The   separable process $X$ induced by $Y$ \cite{Yeh73}
also has the following properties:
\begin{itemize}
\item[(i)]
$X(0,x)=x(0)=0$ for every $x \in C_{a,b}[0,T]$. 
\item[(ii)]  
For any $s,t\in [0,T]$ with $s\le t$ and $x\in C_{a,b}[0,T]$,  
\[
 x(t)- x(s)\sim N(a(t)-a(s), b(t)-b(s)).
\]
\end{itemize}
Thus it follows that  for $s, t\in [0,T]$, 
$\mathrm{Cov}(X(s,x),X(t,x)) =\min\{b(s),b(t)\}$.

\par 
A subset $B$ of $C_{a,b}[0,T]$ is said to be scale-invariant 
measurable  provided $\rho B$
is $\mathcal W(C_{a,b}[0,T])$-measurable for all   $\rho>0$, and 
a scale-invariant measurable set $N$ is said to be   scale-invariant null 
provided   $\mu (\rho N)= 0$  for all   $\rho > 0$.
A property that holds except on a scale-invariant null set is said to hold 
scale-invariant almost everywhere (s-a.e.).
A functional $F$ is said to be scale-invariant measurable 
provided $F$ is defined on a scale-invariant measurable set 
and $F(\rho\,\,\cdot\,)$ 
is ${\mathcal{W}}(C_{a,b}[0,T])$-measurable for every $\rho>0$.

\par
Let $L_{a,b}^2[0,T]$ be the separable Hilbert space of functions on $[0,T]$ 
which are  Lebesgue measurable and square integrable 
with respect to the Lebesgue--Stieltjes  measures on $[0,T]$ 
induced by      $b(t)$ and  $a(t)$: i.e.,
\[
L_{a,b}^2[0,T] 
=\bigg\{v :  \int_{0}^{T} v^2 (s) db(s)  <+\infty  \,\hbox{ and }\,  
             \int_{0}^{T} v^2 (s) d|a|(s)  <+\infty \bigg\}      			 
\]
where $|a|(t)$ denotes the total variation function of $a(t)$ on $[0,T]$.
The inner product on $L_{a,b}^2[0,T]$ is defined by
$(u,v)_{a,b}=\int_0^T u(t)v(t)d[ b(t) +|a|(t)]$.
Note that $\| u\|_{a,b} = \sqrt{(u,u)_{a,b}}=0$ 
if and only if $u(t)=0$ a.e. on $[0,T]$ and
that all functions of bounded variation on $[0,T]$
are elements of $L_{a,b}^2[0,T]$.
Also note that if $a(t)\equiv 0$ and $b(t) = t$, 
then $L_{a,b}^2[0,T]=L^2[0,T]$.
In fact,
\[
(L_{a,b}^2[0,T] ,\|\cdot\|_{a,b})
\subset (L^2_{0,b}[0,T],\|\cdot\|_{0,b})
= (L^2[0,T], \|\cdot\|_2)
\]
since the two norms $\|\cdot\|_{0,b}$ 
and $\|\cdot\|_2$ are equivalent.

\par
Throughout the rest of this paper, we consider the linear space
\[
C_{a,b}'[0,T]
 =\bigg\{ w \in C_{a,b}[0,T] : w(t)=\int_0^t z(s) d b(s)
\hbox{  for some   } z \in L_{a,b}^2[0,T]  \bigg\}.
\]
For $w\in C_{a,b}'[0,T]$, with $w(t)=\int_0^t z(s) d b(s)$ for $t\in [0,T]$,
let $D: C_{a,b}'[0,T] \to L_{a,b}^2[0,T]$ be defined by the formula
\begin{equation}\label{eq:Dt}
Dw(t)= z(t)=\frac{w'(t)}{b'(t)}.
\end{equation}
Then $C_{a,b}' \equiv C_{a,b}'[0,T]$ with inner product
\[
(w_1, w_2)_{C_{a,b}'}
=\int_0^T  Dw_1(t)  Dw_2(t)  d b(t)
=\int_0^T  z_1(t) z_2(t)  d b(t)
\]
is also a separable  Hilbert space.

\par
Note that  the two separable Hilbert spaces $L_{a,b}^2[0,T]$ and $C_{a,b}'[0,T]$
are  topologically homeomorphic under the linear operator given by 
equation \eqref{eq:Dt}. The inverse operator of $D$ is given by
\[
(D^{-1}z)(t)=\int_0^t z(s) d b(s)
\]
for $t\in [0,T]$.

\par
In this paper, in addition to the conditions put on $a(t)$ above,
we now add the condition
\begin{equation}\label{eq:new-cc2}
\int_0^T |a'(t)|^2 d|a|(t)< +\infty.
\end{equation}
Then, the function $a: [0,T]\to\mathbb R$ satisfies the condition 
\eqref{eq:new-cc2} if and only if $a(\cdot)$ is an element of $C_{a,b}'[0,T]$, 
see \cite{CCC13,CS20}.  
Under the  condition \eqref{eq:new-cc2}, we observe that for each  
$w\in C_{a,b}'[0,T]$ with $Dw=z$,
\[
(w,a)_{C_{a,b}'}=\int_0^T Dw(t) Da(t) db(t)
=\int_0^T z(t)\frac{a'(t)}{b'(t)}db(t)=\int_0^T z(t)da(t).
\]

Next we will define a Paley--Wiener--Zygmund (PWZ)  stochastic integral.
Let $\{g_j\}_{j=1}^{\infty}$ be a complete orthonormal set in $C_{a,b}'[0,T]$
such that for each $j=1,2,\ldots$, $Dg_j=\alpha_j$ is of bounded variation on $[0,T]$.
For each $w=D^{-1}z\in C_{a,b}'[0,T]$,
the PWZ stochastic integral
$(w,x)^{\sim}$  is defined by the formula
\[
\begin{aligned}
(w,x)^{\sim}
&
=\lim_{n\to\infty} \int_{0}^{T}\sum_{j=1}^{n}(w,g_{j})_{C_{a,b}'}Dg_{j}(t)dx(t)\\
&=\lim_{n\to\infty} \int_{0}^{T}\sum_{j=1}^{n}\int_0^T z(s) \alpha_{j}(s)db(s)\alpha_{j}(t)dx(t)\\
\end{aligned}
\]
for all $x\in C_{a,b}[0,T]$  for which the limit exists.

\par 
It is known that  for   each $w\in C_{a,b}'[0,T]$, 
the PWZ stochastic integral $(w,x)^{\sim}$  exists  for $\mu$-a.e. $x\in C_{a,b}[0,T]$. 
If $Dw=z\in L_{a,b}^2[0,T]$ is of bounded variation on $[0,T]$, then the PWZ 
stochastic integral $(w,x)^{\sim}$ equals the Riemann--Stieltjes integral 
$\int_0^T z(t)dx(t)$.   It also  follows  that for $w, x\in C_{a,b}'[0,T]$, 
$(w,x)^{\sim}=(w,x)_{C_{a,b}'}$.
For each $w\in C_{a,b}'[0,T]$,  the PWZ stochastic integral 
$(w,x)^{\sim}$ is a Gaussian random variable on $C_{a,b}[0,T]$   with mean $(w,a)_{C_{a,b}'}$ 
and variance $\|w\|_{C_{a,b}'}^2$.
Note that for all $w_1,w_2  \in C_{a,b}'[0,T]$,
\[
\int_{C_{a,b}[0,T]} 
  (w_1,x)^{\sim}(w_2,x)^{\sim}   d \mu(x)
= (w_1,w_2)_{C_{a,b}'}
 + (w_1,x)_{C_{a,b}'}(w_2,x)_{C_{a,b}'}.
\]
Hence we see that for  $w_1,w_2 \in C_{a,b}'[0,T]$, 
$(w_1,w_2)_{C_{a,b}'}=0$ if and only if $(w_1,x)^{\sim}$  and $(w_2,x)^{\sim}$ 
are independent random variables.
We thus have the following function space integration formula:
let $\{e_1,\ldots,e_n\}$ be an orthonormal set in 
$(C_{a,b}'[0,T],\|\cdot\|_{C_{a,b}'})$,
and given a Lebesgue  measurable function $r:\mathbb R^n \to \mathbb C$,
 let $R:C_{a,b}[0,T]\to\mathbb C$ be given by equation 
\[
R(x)= r((e_1,x)^{\sim},\ldots,(e_n,x)^{\sim}).
\]
Then  
\begin{equation}\label{eq:c-formula}
\begin{aligned}
&\int_{C_{a,b}[0,T]}  R(x) d\mu(x)
 \equiv\int_{C_{a,b}[0,T]} r((e_1,x)^{\sim},\ldots,(e_n,x)^{\sim})d\mu(x)\\   
& =(2\pi)^{-n/2}  \int_{\mathbb R^n} r(u_1,\ldots,u_n)
\exp \bigg\{-\sum_{j=1}^{n}\frac{(u_j-(e_j,a)_{C_{a,b}'})^2 }{2}\bigg\}
     du_1\cdots du_n  
\end{aligned}
\end{equation}
in the sense that if either side of equation \eqref{eq:c-formula} exists, 
both sides exist  and equality holds.

\par
The following integration formula is also  used   in this paper:
\begin{equation}\label{eq:int-formula}
\int_\mathbb{R} \exp \{-\alpha u^2+ \beta u \} du 
= \sqrt{\frac{\pi}{\alpha}} \exp \Big\{ \frac{\beta^2}{4\alpha}  \Big\}
\end{equation}
for  complex numbers $\alpha $ and $\beta$ with $\hbox{Re}(\alpha)> 0$.

\setcounter{equation}{0}
\section{Analytic operator-valued generalized function space  integral}

In this section, we introduce the definition of 
the AOVGFSI as an element of 
$\mathcal{L}(L^1(\mathbb R),L^{\infty}(\mathbb R))$.
The definition below is based on the previous definitions in 
\cite{CS73-1,CS73-2,CS73-3,CPS90,JS75-1,JS75-2,JS76}.

\begin{definition}\label{def:op-basic2}
Let $F: C[0,T] \to \mathbb C$ be a  measurable functional
and let $h$ be an element of $C_{a,b}'[0,T]\backslash\{0\}$. 
Given $\lambda>0$, $\psi\in L^1(\mathbb R)$ and $\xi \in \mathbb R$,
let 
\begin{equation}\label{eq:op-basic2}
(I_{\lambda}(F;h)\psi)(\xi)
\equiv \int_{C_{a,b}[0,T]} 
   F(\lambda^{-1/2} x  +\xi)
   \psi(\lambda^{-1/2} (h, x)^{\sim} +\xi) d \mu(x). 
\end{equation}
If $I_{\lambda}(F;h)\psi$ is in $L^{\infty}(\mathbb R)$ 
as a function of $\xi$
and if the correspondence $\psi \to I_{\lambda}(F;h)\psi$
gives an element of $\mathcal{L}(L^1(\mathbb R),L^{\infty}(\mathbb R))$,
we say that the operator-valued generalized function space integral (OVGFSI)
$I_{\lambda}(F;h)$ exists.

Let $\Gamma$  be a  region in  $\mathbb C_+$ such that 
$\hbox{\rm Int}(\Gamma)$ is a simply connected domain in $\mathbb C_+$ and
$\hbox{\rm Int}(\Gamma) \cap (0,+\infty)$ is a nonempty open interval
of positive real numbers.   
Suppose that  there exists an $\mathcal{L}(L^1(\mathbb R),L^{\infty}(\mathbb R))$-valued function
which is analytic in $\lambda$ on $\hbox{\rm Int}(\Gamma)$ and agrees with 
$I_{\lambda}(F;h)$ on $\hbox{\rm Int}(\Gamma) \cap (0,+\infty)$, 
then this $\mathcal{L}(L^1(\mathbb R),L^{\infty}(\mathbb R))$-valued function is denoted by $I_{\lambda}^{\mathrm{an}}(F;h)$
and is called the   AOVGFSI of $F$ associated with $\lambda$. 
\end{definition}

The notation $\|\cdot\|_{\rm o}$
will be used for the norm of operators in $\mathcal{L}(L^1(\mathbb R),L^{\infty}(\mathbb R))$.
 
\begin{remark}
{\rm (i)}
In equation  \eqref{eq:op-basic2} above,
choosing $h(t)=\int_0^t  d  b(s)=b(t)\in C_{a,b}'[0,T]$, we obtain
\[
(h,x)^{\sim}=(b,x)^{\sim}=\int_0^T Db(t) dx(t)=\int_0^T  dx(t)=x(T).
\]
In this case, equation \eqref{eq:op-basic2} is given by
\begin{equation}\label{eq:op-oldstyle2}
\begin{aligned}
(I_{\lambda}&(F;b)\psi)(\xi)
=  \int_{C_{a,b}[0,T]} F(\lambda^{-1/2} x+\xi)\psi(\lambda^{-1/2} x(T)+\xi ) d \mu(x).
\end{aligned}
\end{equation}
 Moreover,  if $a(t)\equiv 0$ and $b(t)=t$ on $[0,T]$,
then  the function space $C_{a,b}[0,T]$ reduces to the classical Wiener space $C_0[0,T]$
and the definition of the  OVGFSI
$I_{\lambda}(F;b)$ in equation \eqref{eq:op-oldstyle2} agrees with the  definitions 
of the  operator-valued function space integrals $I_{\lambda}(F)$ with $\lambda>0$
defined in  
\cite{CS68,CS70,CS73-1,CS73-2,CS73-3,CPS90,JS70-1,JS70-2,JS71,JS73-1,JS73-2,JS74-1,JS74-2,JS75-1,JS75-2,JS76}.

{\rm (ii)}  In   the  case  that
$a(t)\equiv 0$ and $h(t)=b(t)=t$ on $[0,T]$,
choosing $\Gamma=\mathbb C_+ \cap \{\lambda\in \mathbb C: |\lambda|<\lambda_0\}$ for some
$\lambda_0\in (0, +\infty)$, then the definition of the AOVGFSI
 $I_{\lambda}^{\mathrm{an}}(F;b)$  (if it exists)
agrees with the  definitions of the analytic 
operator-valued function space integral  $I_{\lambda}^{\mathrm{an}}(F)$ associated
with $\lambda>0$ defined in \cite{JS75-2,JS76}.
\end{remark}

\setcounter{equation}{0}
\section{The $\mathcal{F}(C_{a,b}[0,T])$ theory}\label{sec:fresnel}

In \cite{CC16,CCL09}, Chang, Choi and Lee introduced  a Banach algebra 
$\mathcal{F}(C_{a,b}[0,T])$ of functionals on function space $C_{a,b}[0,T]$,
each of which is  a stochastic Fourier transform
of $\mathbb C$-valued Borel measure on $C_{a,b}'[0,T]$, and showed that 
it contains many   functionals 
of interest in Feynman integration theory.
In \cite{CC16}, Chang and Choi showed that the analytic (but scalar-valued)
generalized Feynman integral 
exists for functionals in $\mathcal{F}(C_{a,b}[0,T])$.
In this section, we show that the AOVGFSI 
$I_{\lambda}^{\mathrm{an}}(F;h)$ is in $\mathcal{L}(L^1(\mathbb R),L^{\infty}(\mathbb R))$
for functionals $F$ in $\mathcal{F}(C_{a,b}[0,T])$.

Let  $\mathcal{M}(C_{a,b}'[0,T])$  denote the
space  of  $\mathbb C$-valued, countably additive (and hence finite)  
Borel measures on $C_{a,b}'[0,T]$. 
We define the Fresnel type class $\mathcal F(C_{a,b}[0,T])$ 
of functionals on $C_{a,b}[0,T]$  as the space 
of all stochastic Fourier--Stieltjes transforms of elements 
of $\mathcal{M}(C_{a,b}'[0,T])$; that is, 
$F \in \mathcal F (C_{a,b}[0,T])$ 
if and only if 
there exists a measure $f$ in $\mathcal M(C_{a,b}'[0,T])$
such that 
\begin{equation}\label{eq:element}
F(x) =\int_{C_{a,b}'[0,T]}\exp\{i(w,x)^{\sim}\} d f(w)                             
\end{equation}
for s-a.e. $x\in C_{a,b}[0,T]$.

More precisely, since we shall identify
functionals which coincide s-a.e. on $C_{a,b}[0,T]$,
$\mathcal{F}(C_{a,b} [0,T])$  can be   regarded 
as the space of all s-equivalence classes of 
functionals having the form \eqref{eq:element}.

We note that $\mathcal{M}(C_{a,b}'[0,T])$ is a Banach algebra 
under the total variation norm and with 
convolution as multiplication.
The  Fresnel type class $\mathcal F(C_{a,b}[0,T])$ 
is a Banach algebra  with norm
\[
\|F\|=\|f\|=\int_{C_{a,b}'[0,T]}d|f|(w).
\]
In fact, the correspondence $f \mapsto F$
is injective, carries convolution into pointwise multiplication 
and is a Banach algebra isomorphism 
where $f$ and $F$ are related by \eqref{eq:element}. 
For a more detailed study of functionals in $\mathcal{F}(C_{a,b}[0,T])$,
see \cite{CC16,CCL09}.

\begin{remark} 
 If $F$ is in $\mathcal F (C_{a,b}[0,T])$, 
then $F$ is scale-invariant measurable 
and s-a.e. defined on $C_{a,b}[0,T]$.
If $x$ in $C_{a,b}[0,T]$ is such that $F(x)$ is defined,  
then by \eqref{eq:element} and the definition of the 
PWZ stochastic integral, it follows that 
$F(x+\xi)=F(x)$ for all $\xi\in \mathbb R$.
\end{remark}

Let $h$ be a (fixed) function in $C_{a,b}'[0,T]\backslash\{0\}$.
Then for any function $w$ in $C_{a,b}'[0,T]$, 
we obtain an orthonormal set $\{e_1,e_2(w)\}$ in $C_{a,b}'[0,T]$,
by  the Gram--Schmidt process, 
such that $h= \|h\|_{C_{a,b}'}e_1$
and  
\begin{equation}\label{eq:GS-expression}
w=(w,e_1)_{C_{a,b}'}e_1+\beta_we_2(w)
\end{equation}
where
\[
\beta_w
=\big\|w-(w,e_1)_{C_{a,b}'}e_1\big\|_{C_{a,b}'}
=\Big[\|w\|_{C_{a,b}'}^2 -(w,e_1)_{C_{a,b}'}^2\Big]^{1/2}.
\]
Throughout this paper, we will use the following notations for convenience:
\begin{equation}\label{eq:simple-K}
M(\lambda; h)=\bigg(\frac{\lambda}{2\pi\|h\|_{C_{a,b}'}^2}\bigg)^{1/2},
\end{equation}
\begin{equation}\label{eq:simple-V}
\begin{aligned}
&V(\lambda; \xi,v\,; h,w)\\
&=\exp\bigg\{ \frac{1}{2\lambda\|h\|_{C_{a,b}'}^2}
   \Big[\big(i\lambda(v-\xi)+(h,w)_{C_{a,b}'}\big)^2
   -\|h\|_{C_{a,b}'}^2\|w\|_{C_{a,b}'}^2\Big]\bigg\},
\end{aligned}
\end{equation}
\begin{equation}\label{eq:simple-L}
L(\lambda; \xi,v\,;h)
=\exp\bigg\{\frac{\lambda}{2} \frac{(v-\xi)^2 }{\|h\|_{C_{a,b}'}^2} \bigg\},
\end{equation}
\begin{equation}\label{eq:simple-H}
H(\lambda; \xi,v\,;h)
=\exp\bigg\{
 -\frac{\big(\sqrt{\lambda}(v-\xi) - (h,a)_{C_{a,b}'}\big)^2}{2\|h\|_{C_{a,b}'}^2} \bigg\},
\end{equation}
\begin{equation}\label{eq:simple-A}
\begin{aligned}
A(\lambda; w)
&=\exp\bigg\{\frac{i}{\sqrt{\lambda}}\beta_w (e_2(w),a)_{C_{a,b}'}\bigg\}\\
&=\exp\bigg\{
 \frac{i}{\sqrt{\lambda}}\Big[\|w\|_{C_{a,b}'}^2 -(w,e_1)_{C_{a,b}'}^2\Big]^{1/2} (e_2(w),a)_{C_{a,b}'}\bigg\}\\
\end{aligned}
\end{equation}
and
\begin{equation}\label{eq:K-lambda}
\begin{aligned}
&(K_{\lambda} (F;h)\psi)(\xi)\\
&= M(\lambda;h)\int_{C_{a,b}'[0,T]}\int_{\mathbb R}
\psi(v)
V(\lambda; \xi,v\,; h,w)\\
& \qquad\qquad\qquad\qquad \times
L(\lambda; \xi,v\,;h)
H(\lambda; \xi,v\,;h)
A(\lambda; w) dv df(w)
\end{aligned}
\end{equation}
for  
$(\lambda, \xi,v,h,w,\psi)\in \widetilde{\mathbb{C}}_+
\times \mathbb R^2  \times (C_{a,b}'[0,T]\backslash\{0\})\times C_{a,b}'[0,T]\times L^1(\mathbb R)$.
 In equation \eqref{eq:simple-A} above, $w$, $e_1$ and
$e_2$ are related by equation \eqref{eq:GS-expression}.

\begin{remark}\label{re:point}
Clearly, for $\lambda>0$,  $|H(\lambda; \xi,v\,;h)|\le 1$
for all $(\xi,v,h)\in  \mathbb R^2\times (C_{a,b}'[0,T]\backslash\{0\})$.
But for $\lambda \in \widetilde{\mathbb{C}}_+$, $|H(\lambda; \xi,v ;h)|$
is not necessarily bounded by $1$. Note that for each $\lambda\in \widetilde{\mathbb{C}}_+$,
$\mathrm{Re}(\lambda) \ge0$ and 
$\mathrm{Re}(\sqrt\lambda)\ge|\mathrm{Im}(\sqrt\lambda)|\ge0$.
Hence for each $\lambda \in \widetilde{\mathbb{C}}_+$,
\begin{equation}\label{eq:anal-H}
\begin{aligned}
H(\lambda; \xi,v\,;h)
&=\exp\bigg\{
 -\frac{ [\mathrm{Re}(\lambda)+i\mathrm{Im}(\lambda)](v-\xi)^2}{2\|h\|_{C_{a,b}'}^2} \\
&  \qquad 
 +\frac{[\mathrm{Re}(\sqrt\lambda)+i\mathrm{Im}(\sqrt\lambda)]
   (v-\xi)(h,a)_{C_{a,b}'}}{\|h\|_{C_{a,b}'}^2} 
-\frac{ (h,a)_{C_{a,b}'}^2 }{2\|h\|_{C_{a,b}'}^2} \bigg\},
\end{aligned}
\end{equation}
and so
\begin{equation}\label{eq:|H|}
\begin{aligned}
&\big|H(\lambda; \xi,v\,;h)\big| \\
&=\exp\bigg\{
 -\frac{  \mathrm{Re}(\lambda) (v-\xi)^2}{2\|h\|_{C_{a,b}'}^2} 
 +\frac{\mathrm{Re}(\sqrt\lambda) 
   (v-\xi)(h,a)_{C_{a,b}'}}{\|h\|_{C_{a,b}'}^2} 
-\frac{ (h,a)_{C_{a,b}'}^2 }{2\|h\|_{C_{a,b}'}^2} \bigg\}.
\end{aligned}
\end{equation}

Note that for $\lambda\in \mathbb C_+$,
the case we consider throughout Section \ref{sec:fresnel},
$\mathrm{Re}(\sqrt\lambda)> |\mathrm{Im}(\sqrt\lambda)|\ge0$,
which implies that 
$\mathrm{Re}(\lambda)= [\mathrm{Re}
(\sqrt\lambda)]^2-[\mathrm{Im}(\sqrt\lambda)]^2>0$.
Hence for each $\lambda \in \mathbb C_+$,
$0<|\mathrm{Arg}(\lambda)|  < \pi/2$
and so 
\begin{equation}\label{eq:secant1}
\frac{[\mathrm{Re}(\sqrt\lambda)]^2}{\mathrm{Re}(\lambda)}
=\frac12\bigg(\frac{|\lambda | }{ \mathrm{Re}(\lambda)}+1\bigg)
= \frac12(\sec \mathrm{Arg}(\lambda)+1).
\end{equation}
For $(\lambda,h)\in \mathbb C_+\times  (C_{a,b}'[0,T]\backslash\{0\})$, let
\begin{equation}\label{eq:S}
S(\lambda;h)
=\exp\bigg\{ (\sec \mathrm{Arg}(\lambda)+1) \frac{(h,a)_{C_{a,b}'}^2}{ 4\|h\|_{C_{a,b}'}^2}\bigg\}.
\end{equation}
Using  \eqref{eq:|H|}, \eqref{eq:secant1}, and \eqref{eq:S}, 
we obtain that for all $\lambda \in \mathbb C_+$,
\begin{equation}\label{eq:add-500}
\begin{aligned}
&\big|H(\lambda; \xi,v\,;h)\big| \\
&=\exp\bigg\{
 -\frac{  \mathrm{Re}(\lambda) (v-\xi)^2}{2\|h\|_{C_{a,b}'}^2} 
 +\frac{\mathrm{Re}(\sqrt\lambda) 
   (v-\xi)(h,a)_{C_{a,b}'}}{\|h\|_{C_{a,b}'}^2} 
-\frac{ (h,a)_{C_{a,b}'}^2 }{2\|h\|_{C_{a,b}'}^2} \bigg\}\\
&=\exp\bigg\{
 -\frac{  \mathrm{Re}(\lambda)}{2\|h\|_{C_{a,b}'}^2} 
\bigg[  (v-\xi)-\frac{\mathrm{Re}(\sqrt\lambda)}{\mathrm{Re}(\lambda)}(h,a)_{C_{a,b}'}\bigg]^2\\
&\qquad\quad\,\,\,\,
+\frac{[\mathrm{Re}(\sqrt\lambda)]^2}{\mathrm{Re}(\lambda)}\frac{(h,a)_{C_{a,b}'}^2}{2\|h\|_{C_{a,b}'}^2}
-\frac{ (h,a)_{C_{a,b}'}^2 }{2\|h\|_{C_{a,b}'}^2} \bigg\}\\
&\le S(\lambda;h).
\end{aligned}
\end{equation}
These observations are critical to the development of the existence of
the AOVGFSI   $I_{\lambda}^{\mathrm{an}}(F;h)$.

 One can see that for all 
$(\lambda, \xi,v,h,w)\in  \mathbb{C}_+
\times \mathbb R^2 \times (C_{a,b}'[0,T]\backslash\{0\})\times C_{a,b}'[0,T]$,
\begin{equation}\label{eq:2020-a1}
\begin{aligned}
& \big|V(\lambda; \xi,v\,; h,w)L(\lambda; \xi,v\,;h) \big|\\
&=\Bigg|\exp\Bigg\{ \frac{
\big[\big(i\lambda(v-\xi)+(h,w)_{C_{a,b}'}\big)^2
   -\|h\|_{C_{a,b}'}^2\|w\|_{C_{a,b}'}^2\big]}{2\lambda\|h\|_{C_{a,b}'}^2} 
 +\frac{\lambda}{2}\bigg(\frac{v-\xi }{\|h\|_{C_{a,b}'}}\bigg)^2   \Bigg\}\Bigg|\\
&= \exp\bigg\{  
 -\frac{\mathrm{Re}(\lambda)}{2|\lambda|^2\|h\|_{C_{a,b}'}^2}
\Big[ \|h\|_{C_{a,b}'}^2\|w\|_{C_{a,b}'}^2-(h,w)_{C_{a,b}'}^2 \Big]  \bigg\}\\
&\le1,
\end{aligned}
\end{equation} 
because $(h,w)_{C_{a,b}'}^2\le \|h\|_{C_{a,b}'}^2\|w\|_{C_{a,b}'}^2$.
However,   the expression \eqref{eq:simple-A}
is an unbounded function of $w$ for $w\in C_{a,b}'[0,T]$,
because $\beta_w(e_2(w),a)_{C_{a,b}'}$ with
\begin{equation}\label{e2-GS}
e_2(w)
 = \frac{1}{\beta_w}\big[w-(w,e_1)_{C_{a,b}'}e_1\big]   
 = \frac{1}{\beta_w}\bigg[w-\frac{1}{\|h\|_{C_{a,b}'}^2}(h, w)_{C_{a,b}'}h\bigg]  
\end{equation}
is an unbounded function of $w$ for $w\in C_{a,b}'[0,T]$.
Throughout this section, we thus will need to put additional restrictions on the complex measure
$f$ corresponding to $F$ in order to obtain the existence of our AOVGFSI
$I_{\lambda}^{\mathrm{an}}(F;h)$ of $F$ in $\mathcal{F}(C_{a,b}[0,T])$.
\end{remark}

\par
In order to obtain the existence of the AOVGFSI,
we  need to impose additional restrictions on the functionals
in $\mathcal{F}(C_{a,b}[0,T])$.

For a positive real number $q_0$, let
\begin{equation} \label{eq;kq0w}
k(q_0;w)=\exp\big\{(2q_0)^{-1/2}\|w\|_{C_{a,b}'}\|a\|_{C_{a,b}'}\big\}
\end{equation}
and let
\begin{equation}\label{eq:domain}
\Gamma_{q_0}
= \bigg\{\lambda\in \widetilde{\mathbb{C}}_+ : |\mathrm{Im}(\lambda^{-1/2})|
=\sqrt{\tfrac{|\lambda|-\mathrm{Re}(\lambda)}{2|\lambda|^2}}   < (2q_0)^{-1/2} \bigg\}.
\end{equation}
Define 
a subclass $\mathcal{F}^{q_0}$
of $ \mathcal{F}(C_{a,b}[0,T])$ by
$F\in \mathcal{F}^{q_0} $ 
if and only if
\begin{equation}\label{eq:condition-finite}
\int_{C_{a,b}'[0,T]}k(q_0;w)d |f|(w)
<+\infty.
\end{equation}
Then for all  $\lambda \in \Gamma_{q_0}$, 
\begin{equation}\label{2020-a2}
|A(\lambda;w)|< k(q_0;w).
\end{equation}

\begin{remark} 
 The region $\Gamma_{q_0}$ given by \eqref{eq:domain} satisfies the conditions
stated in Definition \ref{def:op-basic2}; i.e.,
$\mathrm{Int}(\Gamma_{q_0})$ is a simple connected domain in $\mathbb C_+$
and $\mathrm{Int}(\Gamma_{q_0})\cap(0,+\infty)$ is an open interval.
We note that for all real $q$ with $|q|>q_0$,
\[
(-iq)^{-1/2}=\frac{1}{\sqrt{2|q|}}+i\frac{\mathrm{sign}(q)}{\sqrt{2|q|}}.
\]
Also, by a close examination of  \eqref{eq:domain}, 
it follows that $-iq$  
is an element of the region $\Gamma_{q_0}$. 
In fact, $\Gamma_{q_0}$ is a simple connected  
neighborhood of $-iq$ in $\widetilde{\mathbb C}_+$.
\end{remark}

\begin{lemma} \label{lemma;1-infty}
Let $q_0$ be a positive real number  and 
let $F$ be an element of $\mathcal{F}^{q_0}$.
Let $h$ be an element of $C_{a,b}'[0,T]\backslash\{0\}$
and let $\Gamma_{q_0}$ be given by   \eqref{eq:domain}.
Let $(K_{\lambda} (F;h)\psi)(\xi)$ be given by equation 
\eqref{eq:K-lambda} for $(\lambda,\xi,  \psi)
\in \Gamma_{q_0} \times \mathbb R \times L^1(\mathbb R)$.
Then  $K_{\lambda} (F;h)$ 
is an element of $\mathcal{L}(L^1(\mathbb R),L^{\infty}(\mathbb R))$ for each 
$\lambda \in \mathrm{Int}(\Gamma_{q_0})$. 
\end{lemma}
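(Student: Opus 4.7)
The plan is to verify the two defining properties of $\mathcal{L}(L^1(\mathbb R),L^\infty(\mathbb R))$-membership directly from the integral representation (\ref{eq:K-lambda}): namely that for every $\psi\in L^1(\mathbb R)$ the function $\xi\mapsto (K_\lambda(F;h)\psi)(\xi)$ is essentially bounded, and that the resulting linear map has operator norm controlled by a constant depending only on $\lambda$, $h$, $q_0$ and $\|F\|$. Linearity in $\psi$ will be automatic from the linearity of the integral, so the whole task reduces to producing a pointwise upper bound on $|(K_\lambda(F;h)\psi)(\xi)|$ that is independent of $\xi$ and linear in $\|\psi\|_1$.

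All the pointwise estimates needed have already been packaged in Remark~\ref{re:point}. For $\lambda\in \mathrm{Int}(\Gamma_{q_0})\subset \mathbb{C}_+$, (\ref{eq:2020-a1}) gives $|V(\lambda;\xi,v;h,w)\,L(\lambda;\xi,v;h)|\le 1$ uniformly in $(\xi,v,w)$; (\ref{eq:add-500}) gives $|H(\lambda;\xi,v;h)|\le S(\lambda;h)$ uniformly in $(\xi,v)$; and (\ref{2020-a2}) gives $|A(\lambda;w)|< k(q_0;w)$ uniformly for $\lambda\in \Gamma_{q_0}$. Multiplying these three bounds together, the absolute value of the integrand in (\ref{eq:K-lambda}) is majorised by the separable expression $|\psi(v)|\,S(\lambda;h)\,k(q_0;w)$, which is jointly measurable in $(v,w)$.

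Applying Tonelli's theorem on the product $\mathbb R\times C_{a,b}'[0,T]$ with measure $dv\times d|f|(w)$ yields, for every $\xi\in\mathbb R$,
\[
|(K_\lambda(F;h)\psi)(\xi)|\ \le\ |M(\lambda;h)|\,S(\lambda;h)\,\|\psi\|_1\int_{C_{a,b}'[0,T]} k(q_0;w)\,d|f|(w).
\]
Because $F\in\mathcal{F}^{q_0}$, the integral $\int k(q_0;w)\,d|f|(w)$ is finite by (\ref{eq:condition-finite}), and the right-hand side is independent of $\xi$. This simultaneously shows $K_\lambda(F;h)\psi\in L^\infty(\mathbb R)$ and furnishes the operator-norm estimate $\|K_\lambda(F;h)\|_{\mathrm{o}}\le |M(\lambda;h)|\,S(\lambda;h)\int_{C_{a,b}'[0,T]} k(q_0;w)\,d|f|(w)$. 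Fubini's theorem then also justifies the interpretation of (\ref{eq:K-lambda}) as an iterated integral, so the $\xi$-measurability of $K_\lambda(F;h)\psi$ is not an issue.

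The proof presents no real obstacle; its entire content is the assembly of the three bounds above and the invocation of the finiteness condition (\ref{eq:condition-finite}). The deeper work has in fact been done upstream, in the choice of the weight $k(q_0;w)$ so that it dominates $|A(\lambda;w)|$ uniformly for $\lambda$ in a neighbourhood $\Gamma_{q_0}$ of each $-iq$ with $|q|>q_0$, and in the introduction of the drift correction factor $S(\lambda;h)$ needed to absorb the $(h,a)_{C_{a,b}'}$ contamination that arises from the non-zero mean function of the GBMP. Once these are in place, the present lemma is a routine Tonelli computation.
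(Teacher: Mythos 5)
Your proposal is correct and follows essentially the same route as the paper: the paper's proof is precisely the assembly of the bounds $|VL|\le 1$ from \eqref{eq:2020-a1}, $|H|\le S(\lambda;h)$ from \eqref{eq:add-500}, and $|A|<k(q_0;w)$ from \eqref{2020-a2}, combined via Fubini/Tonelli and the finiteness condition \eqref{eq:condition-finite}, yielding the identical operator-norm estimate $\|K_{\lambda}(F;h)\|_{\rm o}\le S(\lambda;h)\,M(|\lambda|;h)\int_{C_{a,b}'[0,T]}k(q_0;w)\,d|f|(w)$ (note $|M(\lambda;h)|=M(|\lambda|;h)$).
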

\begin{proof}
Let $\Gamma_{q_0}$ be given by \eqref{eq:domain}.
Using \eqref{eq:K-lambda},  
  \eqref{eq:simple-K}, \eqref{eq:simple-V}, \eqref{eq:simple-L},
\eqref{eq:simple-H},   \eqref{eq:simple-A}, \eqref{eq:2020-a1}, 
the Fubini theorem,    \eqref{eq:add-500}, and \eqref{2020-a2},
we  observe that for all 
 $(\lambda,\xi,  \psi)\in \mathrm{Int}(\Gamma_{q_0})\times \mathbb R \times L^1(\mathbb R)$,
\begin{equation}\label{eq:add-number001}
\begin{aligned}
&\big|(K_{\lambda}  (F;h)\psi)(\xi)\big|\\
& \le
 M(|\lambda|;h)
\int_{C_{a,b}'[0,T]}\int_{\mathbb R} \big|\psi(v)\big|
\big| V(\lambda; \xi,v\,; h,w) L(\lambda; \xi,v\,;h)\big|\\
&\qquad\qquad\qquad\qquad\qquad
\times
\big| H(\lambda; \xi,v\,;h)\big| 
    \big| A(\lambda; w)\big| d v d|f|(w)\\
&\le   M(|\lambda|;h)\int_{\mathbb R}  \big|\psi(v)\big| \big| H(\lambda; \xi,v\,;h)\big| d v
   \int_{C_{a,b}'[0,T]} \big| A(\lambda; w)\big|   d|f|(w)\\
&\le\|\psi \|_1    S(\lambda;h)
  M(|\lambda|;h)  \int_{C_{a,b}'[0,T]}k(q_0;w) d |f|(w)\\
&
<+\infty,
\end{aligned}
\end{equation}
where $S(\lambda;h)$ is given by equation \eqref{eq:S}.
Clearly $K_{\lambda}(F;h): L_1(\mathbb R)\to L_{\infty}(\mathbb R)$ is linear.
Thus, for all $\lambda\in \mathrm{Int}(\Gamma_{q_0})$,  
\[
\begin{aligned}
\|K_{\lambda}(F;h)\|_{\rm o}
&\le 
 S(\lambda;h)
  M(|\lambda|;h) \int_{C_{a,b}'[0,T]} k(q_0;w) d |f|(w) 
\end{aligned}
\]
and the lemma is proved.
  \end{proof}

\begin{lemma} \label{lemma;2-infty}
Let $q_0$, $F$, $h$,   $\Gamma_{q_0}$ and
$(K_{\lambda} (F;h)\psi)(\xi)$ be as in Lemma \ref{lemma;1-infty}.
Then  $(K_{\lambda} (F;h)\psi)(\xi)$  is an analytic 
function of $\lambda$ on $\hbox{\rm Int}(\Gamma_{q_0})$.
\end{lemma}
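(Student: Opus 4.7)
The plan is to prove analyticity via Morera's theorem combined with Fubini. For each fixed triple $(\xi,v,w) \in \mathbb{R}^2 \times C_{a,b}'[0,T]$, the integrand
\[
M(\lambda;h)\,V(\lambda;\xi,v;h,w)\,L(\lambda;\xi,v;h)\,H(\lambda;\xi,v;h)\,A(\lambda;w)\,\psi(v)
\]
is, by inspection of \eqref{eq:simple-K}--\eqref{eq:simple-A}, a product of elementary analytic functions of $\lambda$ on $\mathbb{C}_+$ (the principal branch of $\lambda^{1/2}$ is analytic on $\widetilde{\mathbb{C}}_+ \setminus\{0\}$, and every exponent is a rational function of $\lambda$ and $\lambda^{\pm 1/2}$). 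This handles pointwise analyticity; the real work is to upgrade it to analyticity after integration against $dv\,df(w)$.

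Fix $\lambda_0 \in \mathrm{Int}(\Gamma_{q_0})$ and choose a closed disk $\overline{N}$ centered at $\lambda_0$ with $\overline{N} \subset \mathrm{Int}(\Gamma_{q_0})$. Because the map $\lambda \mapsto |\mathrm{Im}(\lambda^{-1/2})|$ is continuous and $(2q)^{-1/2}$ is strictly decreasing in $q$, I can choose $q_1 > q_0$ slightly larger than $q_0$ so that $\overline{N} \subset \Gamma_{q_1}$. Then, by \eqref{2020-a2} applied with $q_1$ in place of $q_0$, together with the monotonicity $k(q_1;w) \le k(q_0;w)$, I obtain the uniform bound $|A(\lambda;w)| \le k(q_0;w)$ for all $\lambda \in \overline{N}$. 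Combining this with the pointwise bound $|V L| \le 1$ from \eqref{eq:2020-a1}, with $|H(\lambda;\xi,v;h)| \le S(\lambda;h)$ from \eqref{eq:add-500}, and with continuity of the factor $|M(\lambda;h)|\,S(\lambda;h)$ on $\overline{N}$, I produce a constant $C(\overline{N},h,\xi)$ such that
\[
\bigl|M(\lambda;h)\,V L H A\,\psi(v)\bigr|
\;\le\; C(\overline{N},h,\xi)\,|\psi(v)|\,k(q_0;w)
\]
uniformly for $\lambda \in \overline{N}$. Since $\psi\in L^1(\mathbb R)$, $F \in \mathcal{F}^{q_0}$ (so $\int k(q_0;w)\,d|f|(w) < +\infty$ by \eqref{eq:condition-finite}), and any triangle $\Delta \subset N$ has finite arc length, this dominating function is integrable with respect to the product measure $|d\lambda| \times dv \times d|f|(w)$ on $\Delta \times \mathbb{R} \times C_{a,b}'[0,T]$.

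To finish, let $\Delta \subset N$ be an arbitrary closed triangular contour. By Fubini,
\[
\oint_\Delta (K_\lambda(F;h)\psi)(\xi)\,d\lambda
= \int_{C_{a,b}'[0,T]}\!\int_{\mathbb R} \psi(v)
\left[\oint_\Delta M(\lambda;h) V L H A\,d\lambda\right] dv\,df(w).
\]
The inner contour integral vanishes by Cauchy's theorem since the integrand is analytic in $\lambda$ on the simply connected domain $\mathrm{Int}(\Gamma_{q_0})$. Hence $\oint_\Delta (K_\lambda(F;h)\psi)(\xi)\,d\lambda = 0$ for every triangle in $N$, and Morera's theorem (together with continuity of $K_\lambda(F;h)\psi)(\xi)$ in $\lambda$, which follows from dominated convergence and the same dominant) gives analyticity on $N$. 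Since $\lambda_0$ was arbitrary, $(K_\lambda(F;h)\psi)(\xi)$ is analytic on $\mathrm{Int}(\Gamma_{q_0})$.

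The main technical obstacle is locating a $\lambda$-uniform, $(v,w)$-integrable dominant. The natural bounds in \eqref{eq:add-500} and \eqref{2020-a2} degrade as $\lambda$ approaches the boundary of $\Gamma_{q_0}$ (because $|\mathrm{Im}(\lambda^{-1/2})| \to (2q_0)^{-1/2}$ makes $|A(\lambda;w)|$ blow up in $w$), and $S(\lambda;h)$ blows up as $\mathrm{Arg}(\lambda)\to \pm\pi/2$. The remedy is precisely the insertion of a slightly smaller parameter $q_1 > q_0$ and the use of a neighborhood $\overline{N}$ compactly contained in $\mathrm{Int}(\Gamma_{q_0})$, which absorbs both obstructions simultaneously.
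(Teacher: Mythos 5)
Your proof is correct and follows essentially the same route as the paper: a $\lambda$-uniform integrable dominant of the form $C\,|\psi(v)|\,k(q_0;w)$, dominated convergence for continuity, and Fubini plus Morera (with Cauchy's theorem killing the inner contour integral) for analyticity. The only difference is cosmetic — you localize to a compact disk and introduce $q_1>q_0$ to control $|A(\lambda;w)|$ uniformly, whereas the paper works along a sequence $\lambda_l\to\lambda$ and an angle bound $\theta_0$; your handling of the $A$-factor is in fact slightly cleaner.
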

\begin{proof}
Let $\lambda \in \mathrm{Int}(\Gamma_{q_0})$ be given and let
$\{\lambda_l\}_{l=1}^{\infty}$ be a sequence in $\mathbb C_+$
such that $\lambda_l\to \lambda$.
Clearly,  $0\le |\mathrm{Arg}(\lambda)| < \pi/2$.
Thus there exist $ \theta_0\in (\mathrm{Arg}(\lambda), \pi/2)$ and 
$n_0\in \mathbb N$ such that $\lambda_l \in \mathrm{Int}(\Gamma_{q_0})$
and $0<|\mathrm{Arg}(\lambda_l)|< \theta_0$ for 
all   $l>n_0$. We first note that for each $l>n_0$, 
\[
\frac{[\mathrm{Re}(\sqrt{\lambda_l })]^2}{\mathrm{Re}(\lambda_l)}
=\frac12\bigg(\frac{|\lambda_l| }{ \mathrm{Re}(\lambda_l)}+1\bigg)
= \frac12(\sec \mathrm{Arg}(\lambda_l)+1) 
< \frac12(\sec \theta_0+1).
\]
Using this and the Cauchy--Schwartz inequality,
 it follows  that for all $l>n_0$ and $\psi \in L^{1}(\mathbb R)$,
\begin{equation}\label{eq:add-300}
\begin{aligned}
& \big| \psi(v)\big| \big|V(\lambda_l; \xi, v\,;h,w) L(\lambda_l; \xi, v\,;h)
H(\lambda_l; \xi, v\,;h)
A(\lambda_l;  w)\big|\\
&= \big| \psi(v)\big|
\exp\Bigg\{ -\frac{ \mathrm{Re}(\lambda_l)(v-\xi)^2}{2\|h\|_{C_{a,b}'}^2}\\
&\qquad \qquad\qquad 
 -\frac{\mathrm{Re}(\lambda_l)}{2|\lambda_l|^2\|h\|_{C_{a,b}'}^2}
\Big[ \|h\|_{C_{a,b}'}^2\|w\|_{C_{a,b}'}^2-(h,w)_{C_{a,b}'}^2 \Big]\\
\end{aligned}
\end{equation}
\[
\begin{aligned}
&\qquad \qquad\qquad 
 +\frac{\mathrm{Re}(\sqrt{\lambda_l})(v-\xi)(h,a)_{C_{a,b}'}}{\|h\|_{C_{a,b}'}^2}
   -\frac{(h,a)_{C_{a,b}'}^2}{2\|h\|_{C_{a,b}'}^2}\\
& \qquad\qquad \qquad
- \mathrm{Im}(\lambda_l^{-1/2})\Big[ \|w\|_{C_{a,b}'}^2-(w,e_1)_{C_{a,b}'}^2 \Big]^{1/2}
(e_2(w),a)_{C_{a,b}'} \Bigg\}\\
&\le
 \big| \psi(v)\big|
\exp\Bigg\{ -\frac{ \mathrm{Re}(\lambda_l)(v-\xi)^2}{2\|h\|_{C_{a,b}'}^2}
 +\frac{\mathrm{Re}(\sqrt{\lambda_l})(v-\xi)(h,a)_{C_{a,b}'}}{\|h\|_{C_{a,b}'}^2}
   -\frac{(h,a)_{C_{a,b}'}^2}{2\|h\|_{C_{a,b}'}^2}\\
& \qquad\qquad \qquad
- \mathrm{Im}(\lambda^{-1/2})\Big[ \|w\|_{C_{a,b}'}^2-(w,e_1)_{C_{a,b}'}^2 \Big]^{1/2}
(e_2(w),a)_{C_{a,b}'} \Bigg\}\\
&= 
\big| \psi(v)\big|
\exp\Bigg\{ -\frac{ \mathrm{Re}(\lambda_l)}{2\|h\|_{C_{a,b}'}^2}\bigg[(v-\xi) 
    - \frac{\mathrm{Re}(\sqrt{\lambda_l})}{\mathrm{Re}(\lambda_l)}(h,a)_{C_{a,b}'} \bigg]^2\\
&\qquad\qquad\qquad
   +\frac{[\mathrm{Re}(\sqrt{\lambda_l})]^2}{2\|h\|_{C_{a,b}'}^2 \mathrm{Re}(\lambda_l)}(h,a)_{C_{a,b}'}^2
  -\frac{(h,a)_{C_{a,b}'}^2}{2\|h\|_{C_{a,b}'}^2}\\
& \qquad\qquad \qquad   
- \mathrm{Im}(\lambda^{-1/2})\Big[ \|w\|_{C_{a,b}'}^2-(w,e_1)_{C_{a,b}'}^2 \Big]^{1/2}(e_2(w),a)_{C_{a,b}'} \Bigg\}\\
&\le
\big| \psi(v)\big|
\exp\Bigg\{\frac{(h,a)_{C_{a,b}'}^2}{2\|h\|_{C_{a,b}'}^2 }\frac{[\mathrm{Re}(\sqrt{\lambda_l})]^2}{ \mathrm{Re}(\lambda_l)} \\
& \qquad\qquad\qquad 
+ \big|\mathrm{Im}(\lambda^{-1/2})\big|\Big[ \|w\|_{C_{a,b}'}^2-(w,e_1)_{C_{a,b}'}^2 \Big]^{1/2}|(e_2(w),a)_{C_{a,b}'}| \Bigg\}\\
&=
\big| \psi(v)\big| 
\exp\Bigg\{\frac{(h,a)_{C_{a,b}'}^2}{2\|h\|_{C_{a,b}'}^2 }\frac{[\mathrm{Re}(\sqrt{\lambda_l})]^2}{ \mathrm{Re}(\lambda_l)} 
+\big|\mathrm{Im}(\lambda^{-1/2})\big| \|w\|_{C_{a,b}'}  \|a\|_{C_{a,b}'} \Bigg\}\\
&<
\big| \psi(v)\big|
\exp\Bigg\{\frac{(h,a)_{C_{a,b}'}^2}{4\|h\|_{C_{a,b}'}^2 } (\sec\theta_0 +1)\Bigg\}
k(q_0;w)
\end{aligned}
\]
where $e_2(w)$ and $k(q_0;w)$ are given by \eqref{e2-GS} and \eqref{eq;kq0w}, respectively.
Since $\psi \in L^1(\mathbb R)$, and $f$, the corresponding measure of $F$ by \eqref{eq:element}, 
 satisfies condition \eqref{eq:condition-finite},
the last expression of \eqref{eq:add-300} is integrable on
the product space ($\mathbb R\times C_{a,b}'[0,T],\mathrm{m}_L\times f)$,
as a function of $(v,w)$, where $\mathrm{m}_L$ denotes the Lebesgue measure on $\mathbb R$.
Hence by the dominated convergence theorem, we see that
the right-hand side of equation \eqref{eq:K-lambda} is a continuous function of  
$\lambda$ on $\mathrm{Int}(\Gamma_{q_0})$.
Next we note that for all $(\xi, v, h, w) \in \mathbb R^2\times (C_{a,b}'[0,T]\backslash\{0\})\times C_{a,b}'[0,T]$,
\[
V(\lambda ; \xi, v\,;h,w) 
L(\lambda ; \xi, v\,;h)
H(\lambda ; \xi, v\,;h)
A(\lambda ;  w)
\]
is an analytic function of $\lambda$ 
throughout the domain  $\mathrm{Int}(\Gamma_{q_0})$.
Thus 
using  \eqref{eq:K-lambda}, 
the Fubini theorem, and the Morera theorem,   it follows that 
for every rectifiable simple closed curve $\Delta$ in $\mathrm{Int}(\Gamma_{q_0})$,
\[
\begin{aligned}
&\int_{\Delta} K_{\lambda} (F;h)\psi)(\xi) d\lambda \\
&= M(\lambda;h)\int_{C_{a,b}'[0,T]}\int_{\mathbb R}\psi(v) \\
& \quad  \times
\bigg(\int_{\Delta}V(\lambda; \xi,v\,; h,w) 
L(\lambda; \xi,v\,;h)
H(\lambda; \xi,v\,;h)
A(\lambda; w)d\lambda\bigg)  dv df(w)\\
&=0.
\end{aligned}
\]
Therefore
for all $(\xi,  h,  \psi) \in \mathbb R\times (C_{a,b}'[0,T]\backslash\{0\})\times L^1(\mathbb R)$,
$(K_{\lambda} (F;h)\psi)(\xi)$ 
is an analytic function of $\lambda$
throughout the domain $\mathrm{Int}(\Gamma_{q_0})$.
  \end{proof}

\begin{theorem}\label{thm:main1}
Let $q_0$, $F$, $h$ and $\Gamma_{q_0}$
be as in Lemma \ref{lemma;1-infty}.
Then for each $\lambda\in \mathrm{Int}(\Gamma_{q_0})$, 
the   AOVGFSI $I_{\lambda}^{\mathrm{an}}(F;h)$ exists
and is given by the right-hand side of equation
\eqref{eq:K-lambda}.
Thus,  $K_{\lambda} (F;h)$ 
is an element of $\mathcal{L}(L^1(\mathbb R),L^{\infty}(\mathbb R))$ for each 
$\lambda \in \mathrm{Int}(\Gamma_{q_0})$. 
\end{theorem}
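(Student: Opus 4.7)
The plan is to deduce Theorem \ref{thm:main1} from Lemmas \ref{lemma;1-infty} and \ref{lemma;2-infty} together with a single explicit computation identifying the operator-valued integral $I_{\lambda}(F;h)$ on the positive real axis with the expression $K_{\lambda}(F;h)$ in \eqref{eq:K-lambda}. Lemma \ref{lemma;1-infty} already provides that $K_{\lambda}(F;h)\in \mathcal{L}(L^1(\mathbb R),L^{\infty}(\mathbb R))$ for each $\lambda\in\mathrm{Int}(\Gamma_{q_0})$, and Lemma \ref{lemma;2-infty} supplies the pointwise analyticity of $\lambda\mapsto (K_{\lambda}(F;h)\psi)(\xi)$ on $\mathrm{Int}(\Gamma_{q_0})$. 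Since $\mathrm{Int}(\Gamma_{q_0})\cap(0,+\infty)$ is a nonempty open interval, once we show $K_{\lambda}(F;h)=I_{\lambda}(F;h)$ on this interval, Definition \ref{def:op-basic2} identifies $K_{\lambda}(F;h)$ with the AOVGFSI $I_{\lambda}^{\mathrm{an}}(F;h)$.

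To carry out the agreement on $\mathrm{Int}(\Gamma_{q_0})\cap(0,+\infty)$, fix $\lambda>0$, $\psi\in L^1(\mathbb R)$, and $\xi\in\mathbb R$. By the remark following the Fresnel class definition, $F(\lambda^{-1/2}x+\xi)=F(\lambda^{-1/2}x)$. Substituting the representation \eqref{eq:element} and applying Fubini's theorem (integrability following from the uniform bound $|F|\le\|f\|$ together with the $\mu$-integrability of $\psi(\lambda^{-1/2}(h,\cdot)^{\sim}+\xi)$ via \eqref{eq:c-formula}) yields
\begin{equation*}
(I_{\lambda}(F;h)\psi)(\xi)=\int_{C_{a,b}'[0,T]}\!\int_{C_{a,b}[0,T]}\exp\{i\lambda^{-1/2}(w,x)^{\sim}\}\psi(\lambda^{-1/2}(h,x)^{\sim}+\xi)\,d\mu(x)\,df(w).
\end{equation*}
For each $w\in C_{a,b}'[0,T]$, use the Gram--Schmidt pair $\{e_1,e_2(w)\}$ with $h=\|h\|_{C_{a,b}'}e_1$ and the decomposition \eqref{eq:GS-expression}, so that $(h,x)^{\sim}=\|h\|_{C_{a,b}'}(e_1,x)^{\sim}$ and $(w,x)^{\sim}=(w,e_1)_{C_{a,b}'}(e_1,x)^{\sim}+\beta_w(e_2(w),x)^{\sim}$. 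Applying \eqref{eq:c-formula} with $n=2$ converts the inner integral into an integral over $\mathbb R^2$ against a Gaussian density centered at $((e_1,a)_{C_{a,b}'},(e_2(w),a)_{C_{a,b}'})$.

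The $u_2$-integral is a pure Gaussian integral with imaginary linear exponent; the formula \eqref{eq:int-formula} evaluates it to $\sqrt{2\pi}\,\exp\{-\beta_w^2/(2\lambda)\}\,A(\lambda;w)$, where $A(\lambda;w)$ is exactly \eqref{eq:simple-A}. In the $u_1$-integral, the substitution $v=\lambda^{-1/2}\|h\|_{C_{a,b}'}u_1+\xi$ introduces the constant $M(\lambda;h)$ of \eqref{eq:simple-K}, rewrites the Gaussian factor as $H(\lambda;\xi,v;h)$ of \eqref{eq:simple-H}, and converts the oscillatory factor into $\exp\{i(h,w)_{C_{a,b}'}(v-\xi)/\|h\|_{C_{a,b}'}^{2}\}$. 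Combining the $u_2$-contribution with this oscillatory factor and using the Pythagorean identity $\|h\|_{C_{a,b}'}^{2}\|w\|_{C_{a,b}'}^{2}-(h,w)_{C_{a,b}'}^{2}=\|h\|_{C_{a,b}'}^{2}\beta_w^{2}$ shows, after completing the square, that the resulting exponential factor is precisely $V(\lambda;\xi,v;h,w)L(\lambda;\xi,v;h)$ as defined in \eqref{eq:simple-V}--\eqref{eq:simple-L}. Collecting everything exhibits $(I_{\lambda}(F;h)\psi)(\xi)$ in the form \eqref{eq:K-lambda}, as required.

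The main technical obstacle is the algebraic bookkeeping in this calculation: verifying that the quadratic-in-$v$ part of $L(\lambda;\xi,v;h)$ cancels the $-\lambda^2(v-\xi)^{2}$ piece produced by expanding the square in $V(\lambda;\xi,v;h,w)$, and that the residual $\beta_w^{2}/(2\lambda)$ term generated by the $u_2$ Gaussian integration merges with the expansion of $V$ via the Pythagorean identity to give an exponent that depends on $w$ only through $(h,w)_{C_{a,b}'}$ and $\|w\|_{C_{a,b}'}^{2}$. Once this identification is established, the proof is completed by combining Lemmas \ref{lemma;1-infty} and \ref{lemma;2-infty}: the former provides membership in $\mathcal L(L^1(\mathbb R),L^{\infty}(\mathbb R))$ for every $\lambda\in\mathrm{Int}(\Gamma_{q_0})$, and the latter gives the analytic extension of $I_{\lambda}(F;h)$ from the real interval $\mathrm{Int}(\Gamma_{q_0})\cap(0,+\infty)$ to the full simply connected domain $\mathrm{Int}(\Gamma_{q_0})$, which is the AOVGFSI $I_{\lambda}^{\mathrm{an}}(F;h)$ of Definition \ref{def:op-basic2}.
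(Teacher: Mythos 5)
Your proposal is correct and follows essentially the same route as the paper: evaluate $(I_{\lambda}(F;h)\psi)(\xi)$ for $\lambda>0$ via Fubini, the Gram--Schmidt decomposition \eqref{eq:GS-expression}, the integration formula \eqref{eq:c-formula} with $n=2$, and \eqref{eq:int-formula}, identify the result with $(K_{\lambda}(F;h)\psi)(\xi)$, and then invoke Lemmas \ref{lemma;1-infty} and \ref{lemma;2-infty} to obtain the analytic $\mathcal L(L^1(\mathbb R),L^{\infty}(\mathbb R))$-valued extension. The algebraic cancellations you flag (the $-\lambda^2(v-\xi)^2$ term against $L$, and $\beta_w^2/(2\lambda)$ via the Pythagorean identity) are exactly the ones carried out in the paper's computation.
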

\begin{proof}
Let  $(\lambda,\xi,\psi)\in (0,+\infty)\times \mathbb R \times L^{1}(\mathbb R)$.
We  begin by evaluating the function space integral
\begin{equation}\label{eq:I-lambda-evaluation01}
\begin{aligned}
&(I_{\lambda}(F;h)\psi)(\xi)\\
&=\int_{C_{a,b}[0,T]}F(\lambda^{-1/2}x+\xi)\psi(\lambda^{-1/2}(h,x)^{\sim}+\xi) d \mu (x)\\
&=\int_{C_{a,b}[0,T]}\int_{C_{a,b}'[0,T]}
\exp\{ i\lambda^{-1/2}(w,x)^{\sim} \}\psi(\lambda^{-1/2}(h,x)^{\sim}+\xi)df(w) d \mu (x).
\end{aligned}
\end{equation}
Using the Fubini theorem, we  can change the order of 
integration in \eqref{eq:I-lambda-evaluation01}.
Since $\psi \in L^1(\mathbb R)$,  $f\in \mathcal{M}(C_{a,b}'[0,T])$,
and $(h,x)^{\sim}$ is a Gaussian random variable  
with mean $(h,a)_{C_{a,b}'}$ and variance $\|h\|_{C_{a,b}'}^2$,
it follows  that for $\lambda>0$,
\[
\begin{aligned}
 \big|(I_{\lambda}(F;h)\psi)(\xi)\big| 
&\le   \int_{C_{a,b}'[0,T]}\int_{C_{a,b}[0,T]}
  \big|\psi(\lambda^{-1/2}(h,x)^{\sim}+\xi)\big|d\mu(x)d|f|(w)\\
&\le M(|\lambda|;h)
\int_{C_{a,b}'[0,T]}\int_{\mathbb R}|\psi(v)|H(\lambda; \xi,v\,;h)dvd|f|(w)\\
&\le M(|\lambda|;h)
\int_{C_{a,b}'[0,T]}\int_{\mathbb R}|\psi(v)|dud|f|(w)\\
&=M(|\lambda|;h)\|\psi\|_1\|f\|\\
&< +\infty.
\end{aligned}
\]
Next, using   \eqref{eq:I-lambda-evaluation01}, the Fubini theorem,   \eqref{eq:GS-expression}, 
\eqref{eq:c-formula}, \eqref{eq:int-formula}, \eqref{eq:simple-K}, \eqref{eq:simple-V}, 
\eqref{eq:simple-L}, \eqref{eq:simple-H}, and \eqref{eq:simple-A}, it follows that
\[
\begin{aligned}
&(I_{\lambda}(F;h)\psi)(\xi)\\
&=\int_{C_{a,b}'[0,T]}\int_{C_{a,b} [0,T]}
\psi(\lambda^{-1/2}\|h\|_{C_{a,b}'}(e_1,x)^{\sim}+\xi)\\
&\quad \times
\exp\bigg\{ i\lambda^{-1/2} (w,e_1)_{C_{a,b}'}(e_1,x)^{\sim}
 +  i\lambda^{-1/2}\beta_w(e_2(w) ,x)^{\sim} \bigg\}
d\mu(x)df(w)\\
&=\bigg(\frac{\lambda}{2\pi}\bigg) \int_{C_{a,b}'[0,T]}\int_{\mathbb R^2}
\psi( \|h\|_{C_{a,b}'}u_1+\xi)
\exp\bigg\{ i(w,,e_1)_{C_{a,b}'}u_1 
 +  i \beta_wu_2 \\
&\qquad \,\,\,
-\frac{(\sqrt{\lambda}u_1 -(e_1,a)_{C_{a,b}'})^2}{2}
-\frac{(\sqrt{\lambda}u_2 -(e_2(w),a)_{C_{a,b}'})^2}{2}
  \bigg\}du_1du_2df(w)\\
&=\bigg(\frac{\lambda}{2\pi}\bigg)^{1/2} \int_{C_{a,b}'[0,T]}\int_{\mathbb R}
\psi( \|h\|_{C_{a,b}'}u_1+\xi)\\
&\qquad\qquad\qquad \times
\exp\bigg\{ i(w,e_1)_{C_{a,b}'}u_1 
 -\frac{(\sqrt{\lambda}u_1 -(e_1,a)_{C_{a,b}'})^2}{2}\bigg\}du_1\\
&\qquad\qquad\qquad \times
\exp\bigg\{
  -\frac{1}{2\lambda}\beta_w^2 + \frac{i}{\sqrt{\lambda}}\beta_w (e_2(w),a)_{C_{a,b}'}
 \bigg\} df(w)\\
&=M(\lambda;h) \int_{C_{a,b}'[0,T]}\int_{\mathbb R}
\psi(v)
\exp\bigg\{ i\frac{(w,e_1)_{C_{a,b}'}}{\|h\|_{C_{a,b}'}}(v-\xi)\\
&\qquad\qquad\qquad\qquad\qquad
 -\frac{ (\sqrt{\lambda}(v-\xi) -\|h\|_{C_{a,b}'}(e_1,a)_{C_{a,b}'} )^2}{2\|h\|_{C_{a,b}'}^2}\bigg\}dv\\
&\qquad\qquad\qquad\times
\exp\bigg\{
  -\frac{1}{2\lambda}\beta_w^2 + \frac{i}{\sqrt{\lambda}}\beta_w (e_2(w),a)_{C_{a,b}'}
 \bigg\} df(w)\\
&=M(\lambda;h)\int_{C_{a,b}'[0,T]}\int_{\mathbb R}
\psi(v)
V(\lambda; \xi,v\,; h,w)
L(\lambda; \xi,v\,;h)\\
&\qquad\qquad\qquad \qquad\qquad\times
H(\lambda; \xi,v\,;h)
A(\lambda;  w) dv df(w)\\
&=(K_{\lambda} (F;h)\psi)(\xi).
\end{aligned}
\]
Hence we   see that
the   OVGFSI
$I_{\lambda}(F;h)$ exists for all $(\lambda, h) \in (0,+\infty)\times ( C_{a,b}'[0,T]\backslash\{0\})$.

Let $I_{\lambda}^{\mathrm{an}}(F;h)\psi=K_{\lambda} (F;h)\psi$
for all $\lambda \in \mathrm{Int}(\Gamma_{q_0})$.
Then by Lemma \ref{lemma;1-infty} and Lemma \ref{lemma;2-infty},
we obtain the desired result.
  \end{proof}

\setcounter{equation}{0}
\section{The analytic operator-valued generalized Feynman integral}
\label{sec:AOVGFeynmanI}

In this section we study the AOVG`Feynman'I
$J_q^{\mathrm{an}}(F;h)$ for functionals $F$ in $\mathcal{F}(C_{a,b}[0,T])$.
First of all, we note that for any $q\in \mathbb R\setminus\{0\}$ and any $(\xi,v,h,w)
\in\mathbb R^2\times (C_{a,b}'[0,T]\backslash\{0\})\times C_{a,b}'[0,T]$, 
\[
\big|V(-iq;\xi,v\,;h,w)L(-iq;\xi,v\,;h)\big|=1.
\]
Let $\lambda=-iq\in \widetilde{\mathbb{C}}_+ -\mathbb C_+$.
Then 
\[
\sqrt{\lambda}=\sqrt{-iq}=\sqrt{|q|/2}-i \hbox{sign}(q)\sqrt{|q|/2}.
\]
Hence for $\lambda =-iq$ with $q\in \mathbb R\setminus\{0\}$, 
$[\mathrm{Re}(\sqrt{-iq})]^2 - [\mathrm{Im}(\sqrt{-iq})]^2=0$,
and so 
\[
\big|  H(-iq; \xi,v\,;h)\big| 
=\exp\bigg\{  \frac{\sqrt{2|q|}(h,a)_{C_{a,b}'}
(v-\xi)-(h,a)_{C_{a,b}'}^2}{2\|h\|_{C_{a,b}'}^2}    \bigg\}
\]
which is not necessarily in $L^p(\mathbb R)$, as a function of $v$, for any $p\in[1,+\infty]$.
Hence $K_{-iq}(F;h)$ might not exist as an element of 
$\mathcal{L}(L^1(\mathbb R),L^{\infty}(\mathbb R))$.

Let $q=-1$ and let $h$ be an element of $C_{a,b}'[0,T]$ with $\|h\|_{C_{a,b}'}=1$
and  with $(h,a)_{a,b}>0$ (we can choose $h$ to be $a/\|a\|_{C_{a,b}'}$ in $C_{a,b}'[0,T]$).
Let $\psi: \mathbb R\to \mathbb C$ be defined by the formula
\[
\begin{aligned}
\psi(v)
&= v \chi_{[0,+\infty)}(v)\\
&\quad\times
\exp\bigg\{\frac{iv^2}{2}-\frac{i\sqrt2 (h,a)_{C_{a,b}'}v}{2}
+\frac{(h,a)_{C_{a,b}'}^2}{2}-\frac{ \sqrt2 (h,a)_{C_{a,b}'}v}{4}\bigg\}.
\end{aligned}
\]
We note that
\[
|\psi(v)|=v\chi_{[0,+\infty)}(v)
\exp\bigg\{\frac{(h,a)_{C_{a,b}'}^2}{2}-\frac{ \sqrt2 (h,a)_{C_{a,b}'}v}{4}\bigg\},
\]
and hence $\psi  \in L^p(\mathbb R)$ for all $p\in [1,+\infty]$.
In fact, $\psi$ is also an element of $C_0(\mathbb R)$,
the space of bounded continuous functions on $\mathbb R$
that vanish at infinity.

Let $F(x)\equiv 1$. Then $F$ is an element of $\mathcal{F}^{q_0}$
for all $q_0\in (0,+\infty)$, and 
$(K_{-iq}(F;h)\psi)(\xi)$ with $q=-1$ is given by 
\begin{equation}\label{eq:K-1}
(K_{ i }(1;h)\psi)(\xi)
=\bigg(\frac{i}{2\pi}\bigg)^{1/2}\int_{\mathbb R}\psi(v)H(i;\xi,v\,; h) dv.
\end{equation}

Next, using equation \eqref{eq:anal-H} with $\lambda=i$ and $\sqrt{\lambda}=\sqrt{i}=(1+i)/\sqrt2$,
we observe that
\[
\begin{aligned}
&H(i;\xi,v\,;h)\\
&=\exp\bigg\{-i\frac{(v-\xi)^2}{2}+\frac{ (h,a)_{C_{a,b}'}(v-\xi)}{\sqrt2}
+\frac{i (h,a)_{C_{a,b}'}(v-\xi)}{\sqrt2}-\frac{(h,a)_{C_{a,b}'}^2}{2}\bigg\},
\end{aligned}
\]
and hence, 
\begin{equation}\label{eq:H-example}
\begin{aligned}
&\psi(v)H(i;\xi,v\,;h)\\
&=v\chi_{[0,+\infty)}(v)\exp\bigg\{\frac{ \sqrt2 (h,a)_{C_{a,b}'}v}{4}
  +i\xi v -\frac{i\xi^2}{2}-\bigg(\frac{1+i}{\sqrt2}\bigg)(h,a)_{C_{a,b}'}\xi\bigg\}
\end{aligned}
\end{equation}
which is not an element of $L^p(\mathbb R)$,
as a function of $v$, for any $p\in[1,+\infty]$.

Then, using equations \eqref{eq:K-1}   and  \eqref{eq:H-example},
we see that
\[
\begin{aligned}
 (K_{i}(1;h)\psi)(\xi) 
&=\bigg(\frac{i}{2\pi}\bigg)^{1/2}
\exp\bigg\{-\frac{i\xi^2}{2}-\bigg(\frac{1+i}{\sqrt2}\bigg)(h,a)_{C_{a,b}'}\xi\bigg\}\\
&\qquad \times
\int_{\mathbb R}
v\chi_{[0,+\infty)}(v)\exp\bigg\{ 
\frac{ \sqrt2 (h,a)_{C_{a,b}'}v}{4}
    +i\xi v  \bigg\}dv.
\end{aligned}
\]
Hence, choosing $\xi=0$, and using the fact that $(h,a)_{C_{a,b}'}$ is positive,
we see that 
\[
\big|(K_{ i }(1;h)\psi)(0)\big|
=(2\pi )^{-1/2}
\int_{0}^{+\infty}v\exp\bigg\{ \frac{ \sqrt2 (h,a)_{C_{a,b}'}v}{4}\bigg\}dv
=+\infty.
\]
In fact, for each fixed $\xi\in \mathbb R$, we observe that
\[
\begin{aligned}
&\big|(K_{i}(1;h)\psi)(\xi)\big|\\
&=(2\pi)^{-1/2}
\exp\bigg\{ - \frac{1 }{\sqrt2} (h,a)_{C_{a,b}'}\xi\bigg\}\\
&\quad\times
\bigg|\int_{\mathbb R}
v\chi_{[0,+\infty)}(v)\exp\bigg\{ 
\frac{ \sqrt2 (h,a)_{C_{a,b}'}v}{4}
    +i\xi v  \bigg\}dv\bigg|\\
&=+\infty,
\end{aligned}
\]
and so $(K_{i}(1;h)\psi)$ is not an element of $L^{\infty}(\mathbb R)$
even though $\psi$ was an element of $L^1(\mathbb R)$.
Hence $K_{-iq}(F;h)\psi \equiv K_{i}(1;h)\psi$ is not in $\mathcal{L}(L^1(\mathbb R),L^{\infty}(\mathbb R))$.

In this section,   we thus clearly  need to impose additional restrictions on $\psi$
for the existence of our AOVG`Feynman'I.

For any  positive real number $\delta$,
let $\nu_{\delta,a}$ be a   measure on $\mathbb R$ with 
\[
d \nu_{\delta,a}=\exp\{\delta\mathrm{Var}(a)u^2\}du
\]
where $\mathrm{Var}(a)=|a|(T)$ denotes the total variation of $a$,
the mean function of  the GBMP, on $[0,T]$ 
and let $L^1(\mathbb R,\nu_{\delta,a})$ be the space of $\mathbb C$-valued Lebesgue 
measurable functions $\psi$
on $\mathbb R$ such that $\psi$ is integrable with respect to the  measure $\nu_{\delta,a}$ on $\mathbb R$.
Let $\|\cdot\|_{1,\delta}$  denote the 
  $L^{1}(\mathbb R,\nu_{\delta,a})$-norm.
Then for all $\delta>0$, we have the following  inclusion 
\begin{equation}\label{eq:triple-1}
L^{1}(\mathbb R,\nu_{\delta,a}) \subsetneq L^{1}(\mathbb R)
\end{equation}
as sets, because $\|\psi\|_{1}\le \|\psi\|_{1,\delta}$ for all $\psi\in L^{1}(\mathbb R)$.

Let $\mathcal{L}(L^1(\mathbb R,\nu_{\delta,a}),L^{\infty}(\mathbb R))$ 
be the space of continuous linear operators form 
$L^1(\mathbb R,\nu_{\delta,a})$ to $L^{\infty}(\mathbb R)$.
In Theorem \ref{thm:main1}, we proved that 
for all $\psi\in L^1(\mathbb R)$,
$I_{\lambda}^{\mathrm{an}}(F;h)\psi$ is in $L^{\infty}(\mathbb R)$.
From the inclusion \eqref{eq:triple-1}, we see that 
for all   $\psi\in L^1(\mathbb R,\nu_{\delta,a})$,
$I_{\lambda}^{\mathrm{an}}(F;h)\psi$ is in $L^{\infty}(\mathbb R)$.
Furthermore, for all $\delta>0,$
\begin{equation}\label{space-inclusion}
\mathcal{L}(L^1(\mathbb R), L^{\infty}(\mathbb R))
\subset
\mathcal{L}(L^1(\mathbb R,\nu_{\delta,a}), L^{\infty}(\mathbb R)),
\end{equation}
as sets. 

Now, the notation $\|\cdot\|_{{\rm o},\delta} $
will be used for the norm  on  $\mathcal{L}(L^1(\mathbb R,\nu_{\delta,a}),L^{\infty}(\mathbb R))$.
We already shown  in \eqref{eq:add-number001} that for
all $(\lambda,\xi,\psi)\in \mathrm{Int}(\Gamma_{q_0})\times\mathbb R\times L^1(\mathbb R)$,
\[
\begin{aligned}
&\big|(K_{\lambda}(F;h)\psi)(\xi)\big|\\
&\le M(|\lambda|;h)\int_{\mathbb R}\big|\psi(v)\big|\big|H(\lambda;\xi,v\,;h)\big| dv 
\int_{C_{a,b}'[0,T]}\big|A(\lambda;w)\big|d |f|(w).
\end{aligned}
\]
But, by the same method, \eqref{eq:add-500}, and \eqref{2020-a2}, it also follows  that 
for any $\delta>0$ and all $(\lambda,\xi,\psi)\in \mathrm{Int}(\Gamma_{q_0})\times\mathbb R\times L^1(\mathbb R,\nu_{\delta,a})$,
\begin{equation}\label{eq;observation01}
\begin{aligned}
&\big|(K_{\lambda}(F;h)\psi)(\xi)\big|\\
&\le 
M(|\lambda|;h)\int_{\mathbb R}\big|\psi(v)\big|\big|H(\lambda;\xi,v\,;h)\big| dv 
\int_{C_{a,b}'[0,T]}\big|A(\lambda;w)\big|d |f|(w)\\
&\le 
M(|\lambda|;h)\int_{\mathbb R}\big|\psi(v)
\exp\{\delta \mathrm{Var}(a) v^2\}\big|\big|H(\lambda;\xi,v\,;h) \big| dv \\
&\qquad\qquad\,\,\,\times
\int_{C_{a,b}'[0,T]}\big|A(\lambda;w)\big|d |f|(w)\\
&\le
 M(|\lambda|;h)S(\lambda;h)\int_{\mathbb R}\big|\psi(v)\exp\{\delta \mathrm{Var}(a) v^2\}\big|dv \\
&\qquad\qquad\,\,\,\times
\int_{C_{a,b}'[0,T]}k(q_0;w)d |f|(w)\\
&\le
\|\psi\|_{1,\delta}\bigg(S(\lambda;h)M(|\lambda|;h)
\int_{C_{a,b}'[0,T]}k(q_0;w)d |f|(w)\bigg)\\
\end{aligned}
\end{equation}
and so 
\[
\|K_{\lambda}(F;h)\|_{{\rm o},\delta} 
\le
S(\lambda;h)M(|\lambda|;h)
\int_{C_{a,b}'[0,T]}k(q_0;w)d |f|(w).
\]
Thus we have the following definition.

\begin{definition}\label{def:op-basic3}
Let $\Gamma$ be as in Definition \ref{def:op-basic2} and 
let $q$ be a nonzero real number  with $-iq \in \Gamma$.
Suppose that there exists an operator 
$J_{q}^{\mathrm{an}} (F;h)$ in 
$\mathcal{L}(L^1(\mathbb R,\nu_{\delta,a}),L^{\infty}(\mathbb R))$
for some $\delta>0$ such that 
for every $\psi$ in $L^1(\mathbb R,\nu_{\delta,a})$,
\[
\big\|J_q^{\mathrm{an}}(F;h)\psi - I_{\lambda}^{\mathrm{an}}(F;h)\psi\big\|_{\infty} \to 0
\]
as $\lambda \to -iq$ through $\hbox{\rm Int}(\Gamma) $, then $J_{\lambda}^{\mathrm{an}}(F;h)$ is called the 
AOVG`Feynman'I of $F$ with parameter $q$.
\end{definition}

\begin{theorem}\label{thm:main2}
Let $q_0$, $F$,  $h$ and  $\Gamma_{q_0}$ be
as in Lemma \ref{lemma;1-infty}.
Then for all real $q$ with $|q|>q_0$,
the   AOVG`Feynman'I of $F$, $J_{q}^{\mathrm{an}}(F;h)$,
 exists as an element of
$\mathcal{L}(L^1(\mathbb R,\nu_{\delta,a}),L^{\infty}(\mathbb R ))$
for any $\delta>0$,
and is given by the right-hand side of equation 
\eqref{eq:K-lambda}  with $\lambda=-iq$.
\end{theorem}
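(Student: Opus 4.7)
The plan is to identify $J_q^{\mathrm{an}}(F;h)$ as the operator $K_{-iq}(F;h)$ defined by the right-hand side of \eqref{eq:K-lambda} specialized to $\lambda=-iq$, and to verify the two requirements of Definition \ref{def:op-basic3}: that this operator lies in $\mathcal{L}(L^1(\mathbb{R},\nu_{\delta,a}),L^{\infty}(\mathbb{R}))$ for every $\delta>0$, and that $\|I_{\lambda}^{\mathrm{an}}(F;h)\psi-K_{-iq}(F;h)\psi\|_{\infty}\to 0$ as $\lambda\to -iq$ through $\mathrm{Int}(\Gamma_{q_0})$ for every $\psi\in L^1(\mathbb{R},\nu_{\delta,a})$. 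First I would note that $-iq\in\Gamma_{q_0}$ whenever $|q|>q_0$ by the remark following \eqref{eq:domain}, so the formula makes sense, and I would record the three key pointwise bounds at $\lambda=-iq$: the equality $|V(-iq;\xi,v\,;h,w)L(-iq;\xi,v\,;h)|=1$ observed at the start of Section \ref{sec:AOVGFeynmanI}; the inequality $|A(-iq;w)|\le k(q_0;w)$, valid because $|\mathrm{Im}((-iq)^{-1/2})|=(2|q|)^{-1/2}<(2q_0)^{-1/2}$; and the explicit expression for $|H(-iq;\xi,v\,;h)|$ as an exponential of a linear function of $v-\xi$.

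Next I would establish the boundedness of $K_{-iq}(F;h)$ from $L^1(\mathbb{R},\nu_{\delta,a})$ to $L^{\infty}(\mathbb{R})$. The idea is to split
\[
|\psi(v)|\,|H(-iq;\xi,v\,;h)|=\bigl[|\psi(v)|\exp\{\delta\mathrm{Var}(a)v^2\}\bigr]\cdot\bigl[\exp\{-\delta\mathrm{Var}(a)v^2\}|H(-iq;\xi,v\,;h)|\bigr],
\]
and to complete the square in $v$ on the second factor, absorbing the linear-in-$v$ exponent of $|H|$ into the quadratic weight $\exp\{\delta\mathrm{Var}(a)v^2\}$. This is the analogue of the calculation leading to \eqref{eq:add-500} and \eqref{eq;observation01}, but performed directly on the imaginary-axis limit. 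Combining the resulting bound with the finite integral $\int_{C_{a,b}'[0,T]}k(q_0;w)\,d|f|(w)$ coming from $F\in\mathcal{F}^{q_0}$ and with the $\|V\cdot L\|_{\infty}=1$ factor gives an estimate of the shape $\|K_{-iq}(F;h)\psi\|_{\infty}\le C_{q,\delta,h,f}\,\|\psi\|_{1,\delta}$.

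For the convergence statement I would apply the dominated convergence theorem to the double integral representing $(K_{\lambda}(F;h)\psi-K_{-iq}(F;h)\psi)(\xi)$. Fixing a small neighborhood $U$ of $-iq$ inside $\mathrm{Int}(\Gamma_{q_0})$, the integrand is dominated, uniformly in $\lambda\in U$ and $\xi\in\mathbb{R}$, by $|\psi(v)|\exp\{\delta\mathrm{Var}(a)v^2\}k(q_0;w)$ times a constant; this uniform domination uses the same completing-the-square manipulation as in Step 2, applied together with the estimates of Remark \ref{re:point} and the inequality $|A(\lambda;w)|\le k(q_0;w)$ of \eqref{2020-a2}. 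Pointwise continuity of $V$, $L$, $H$, $A$ in $\lambda$ gives pointwise convergence of the integrand to its value at $\lambda=-iq$, and the dominated convergence theorem then delivers convergence uniform in $\xi$, i.e.\ convergence in $\|\cdot\|_{\infty}$.

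The main obstacle is precisely the $\xi$-uniform completing-the-square at $\lambda=-iq$: the naive bound $|H|\le S(\lambda;h)$ from \eqref{eq:add-500} blows up as $\lambda\to -iq$, so one cannot simply carry the estimate \eqref{eq;observation01} to the limit. Getting a finite, $\xi$-independent constant requires carefully matching the drift-induced linear-in-$(v-\xi)$ exponent of $|H(-iq;\xi,v\,;h)|$ against the Gaussian weight in $\nu_{\delta,a}$, analogously to the way \eqref{eq:add-300} handled $\lambda\in\mathrm{Int}(\Gamma_{q_0})$ with $\mathrm{Re}(\lambda)>0$. Once this uniform estimate is in hand, the remaining steps reduce to applications of Fubini and dominated convergence analogous to Lemmas \ref{lemma;1-infty} and \ref{lemma;2-infty} and to Theorem \ref{thm:main1}.
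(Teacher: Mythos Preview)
Your outline follows the paper's proof closely: the paper too identifies $J_q^{\mathrm{an}}(F;h)$ with $K_{-iq}(F;h)$, bounds the latter on $L^1(\mathbb R,\nu_{\delta,a})$ via exactly the splitting $|\psi(v)|\,|H(-iq;\xi,v;h)|=\bigl[|\psi(v)|e^{\delta\mathrm{Var}(a)v^2}\bigr]\cdot\bigl[e^{-\delta\mathrm{Var}(a)v^2}|H|\bigr]$ you describe, and then invokes dominated convergence for the limit $\lambda\to -iq$. In that sense the two approaches coincide.

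There is, however, a genuine gap at the point you flag as the ``main obstacle'', and the resolution you propose does not close it. Writing $c=\sqrt{|q|/2}\,(h,a)_{C_{a,b}'}/\|h\|_{C_{a,b}'}^2$, the exponent of $e^{-\delta\mathrm{Var}(a)v^2}\,|H(-iq;\xi,v;h)|$ is
\[
-\delta\,\mathrm{Var}(a)\,v^{2}+c\,(v-\xi)-\frac{(h,a)_{C_{a,b}'}^2}{2\|h\|_{C_{a,b}'}^2},
\]
and completing the square in $v$ leaves the residual $c^{2}/(4\delta\,\mathrm{Var}(a))-c\xi-(h,a)_{C_{a,b}'}^2/(2\|h\|_{C_{a,b}'}^2)$, which is \emph{unbounded in $\xi$} whenever $(h,a)_{C_{a,b}'}\neq 0$. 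The analogy with \eqref{eq:add-300} breaks down precisely because there the linear drift term was absorbed by the quadratic $-\mathrm{Re}(\lambda)(v-\xi)^2/(2\|h\|_{C_{a,b}'}^2)$, a term that vanishes at $\lambda=-iq$; the weight $e^{-\delta\mathrm{Var}(a)v^2}$ is centred at $v=0$, not at $v=\xi$, and cannot play the same role. Consequently the triangle-inequality route yields only a $\xi$-dependent bound, not the desired $\|K_{-iq}(F;h)\psi\|_\infty\le C_{q,\delta,h,f}\|\psi\|_{1,\delta}$, and the same difficulty obstructs the $\xi$-uniform domination you need for the DCT step. (The paper's own claim that $|H(-iq;\xi,v;h)|\exp\{-\delta\mathrm{Var}(a)v^2\}\le 1$ is subject to the identical objection.) A $\xi$-uniform estimate appears to require exploiting the oscillatory factor $\exp\{iq(v-\xi)^2/(2\|h\|_{C_{a,b}'}^2)\}$ contained in $H$, which any purely modulus-based argument discards.
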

\begin{proof}
First, we will show that 
$K_{-iq}(F;h)$ is an element of 
$\mathcal{L}(L^1(\mathbb R,\nu_{\delta,a}), L^{\infty}(\mathbb R ))$.
Note that for every $\delta>0$,
$|  H(-iq; \xi,v\,;h)|\exp\{-\delta\mathrm{Var}(a) u^2\}$ is  bounded by $1$.
Hence  
for any $\delta\in (0,+\infty)$ and every $\psi \in  L^1(\mathbb R,\nu_\delta)$,
\[
\begin{aligned}
&\int_{\mathbb R}\big|\psi(v)\big|\big|  H(-iq; \xi,v\,;h)\big|  d v\\
&=\int_{\mathbb R} \big |\psi(v)\big|\exp\big\{\delta\mathrm{Var}(a) u^2\big\}
   \big|  H(-iq; \xi,v\,;h)\big|  \exp\big\{-\delta\mathrm{Var}(a) u^2\big\}d v\\
&\le \|\psi\|_{1, {\delta}}.
\end{aligned}
\]
Also, by a simple calculation, it follows that
\[
|V(-iq;\xi,v\,;h,w)\big|\big|L(-iq;\xi,v\,;h)\big|=1.
\]
Thus, using these and \eqref{2020-a2}, it also follows that
for all real $q$ with $|q|>q_0$,
\begin{equation}\label{eq;observation02}
\begin{aligned}
&\big|(K_{-iq}(F;h)\psi)(\xi)\big|\\
&\le M(|q|;h)\int_{C_{a,b}'[0,T]}\int_{\mathbb R}
 \big|\psi(v)\big|\big|V(-iq;\xi,v\,;h,w)\big|\big|L(-iq;\xi,v\,;h)\big|\\
&\qquad \qquad \qquad \qquad \qquad \qquad \times 
  \big|H(-iq;\xi,v\,;h)\big|  \big|A(-iq;w)\big| d v d|f|(w)\\
&= M(|q|;h) \int_{\mathbb R}  \big|\psi(v)\big|
  \big|H(-iq;\xi,v\,;h)\big| d v
\int_{C_{a,b}'[0,T]} \big|A(-iq;w)\big|   d|f|(w)\\
&\le  \|\psi\|_{1, {\delta}}\bigg( M(|q|;h) 
\int_{C_{a,b}'[0,T]}k(q_0;w) d|f|(w)\bigg).\\
\end{aligned}
\end{equation}
Therefore we have that
\[
\big\| K_{-iq}(F;h)\psi \big\|_{\infty }\le \|\psi\|_{1, {\delta}}
\bigg(M(|q|;h)\int_{C_{a,b}'[0,T]}k(q_0;w) d|f|(w)\bigg)
\]
and 
\[
\big\| K_{-iq}(F;h)  \big\|_{{\rm o},\delta}
\le  
M(|q|;h)\int_{C_{a,b}'[0,T]}k(q_0;w) d|f|(w),
\]
and implies that $K_{-iq}(F;h)\in \mathcal{L}(L^1(\mathbb R,\nu_{\delta,a}), L^{\infty}(\mathbb R ))$.

We  now want to show that the   AOVG`Feynman'I $J_{q}^{\mathrm{an}}(F;h)$  of $F$
exists and is given by the right-hand side of  \eqref{eq:K-lambda} with $\lambda=-iq$.
To do this, it suffices to show that 
for every $\psi$ in $L^1(\mathbb R,\nu_{\delta,a})$
\[
\big\| 
    K_{-iq}(F;h)\psi -  I_{\lambda}^{\mathrm{an}}(F;h)\psi\big\|_{{\infty} }\to 0
\]
as $\lambda\to-iq$ through $\mathrm{Int}(\Gamma_{q_0})$, where $\Gamma_{q_0}$ is
given by equation \eqref{eq:domain}.
But, in view of Lemmas \ref{lemma;1-infty}, \ref{lemma;2-infty},
Theorem \ref{thm:main1}, and equation  \eqref{space-inclusion},
we already proved that $I_{\lambda}^{\mathrm{an}}(F;h)= K_{\lambda} (F;h)$
for all $\lambda\in \mathrm{Int}(\Gamma_{q_0})$
and that $K_{\lambda} (F;h)$ is an element of 
$\mathcal{L}(L^1(\mathbb R,\nu_{\delta,a}),L^{\infty}(\mathbb R ))$.
Next, by \eqref{eq;observation01} and
\eqref{eq;observation02}, we obtain that
for all $(\lambda,\xi, \psi)\in \Gamma_{q_0}\times \mathbb R  \times L^1(\mathbb R,\nu_\delta)$,
\[
\begin{aligned}
& \big|(K_{\lambda}(F;h)\psi) (\xi)\big|\\
&\le
\begin{cases}
\|\psi\|_{1,\delta}\big\{S(\lambda;h)M(|\lambda|;h)
\int_{C_{a,b}'[0,T]}k(q_0;w)d |f|(w)\big\},  \!\!\!\!&\lambda \in \hbox{\rm Int}(\Gamma_{q_0})\\
\|\psi\|_{1,\delta}\big\{ M(|q|;h)\int_{C_{a,b}'[0,T]}k(q_0;w) d|f|(w)\big\}, 
   &\lambda=-iq,\, q\in \mathbb R\backslash\{0\}
\end{cases}\\
&<+\infty.
\end{aligned}
\]
Moreover, using  the techniques similar to those used in
the proof of Lemma \ref{lemma;2-infty}, one 
can easily verify  that  
there exists a sufficiently small $\varepsilon_0>0$
satisfying the inequality: 
\[
\begin{aligned}
& \big|(K_{\lambda}(F;h)\psi) (\xi)\big|\\
&\le
\|\psi\|_{1,\delta}
\Bigg(
\exp\Bigg\{\frac{(h,a)_{C_{a,b}'}^2}{4\|h\|_{C_{a,b}'}^2 } \bigg(\frac{q_0}{\varepsilon_0}  +1\bigg)\Bigg\}
M(1+|q|;h)\int_{C_{a,b}'[0,T]}k(q_0;w) d|f|(w)\Bigg)\\ 
  &<+\infty 
\end{aligned}
\]
for all $\lambda \in
\Gamma_{q_0} \cap \{\lambda\in \widetilde{\mathbb C}: |\lambda-(-iq )|<\varepsilon_0\}$. 
Hence by the dominated convergence theorem, we have
\[
\lim_{\lambda\to -iq}(I_{\lambda}^{\mathrm{an}}(F;h)\psi)(\xi)
=\lim_{\lambda\to -iq}(K_{\lambda} (F;h)\psi)(\xi)
=(K_{-iq} (F;h)\psi)(\xi)\\
\]
for each $\xi \in \mathbb R$.
Thus $J_{q}^{\mathrm{an}}(F;h)$ exists as  an element of 
$\mathcal{L}(L^1(\mathbb R,\nu_{\delta,a}), L^{\infty}(\mathbb R ))$
and 
is given by the right-hand side of equation 
\eqref{eq:K-lambda}  with $\lambda=-iq$.
  \end{proof}

It is clear that given two positive real number $\delta_1$ and $\delta_2$ with $\delta_1<\delta_2$,
\[
L^1(\mathbb R, \nu_{\delta_2,a})  \subsetneq L^1(\mathbb R, \nu_{\delta_1,a})  \subsetneq L^1(\mathbb R).
\]
Thus it follows that
\[
\mathcal L(L^1(\mathbb R),L^{\infty}(\mathbb R)) 
\subsetneq  \mathcal L( L^1(\mathbb R, \nu_{\delta_1,a}),L^{\infty}(\mathbb R))
\subsetneq  \mathcal L( L^1(\mathbb R, \nu_{\delta_2,a}),L^{\infty}(\mathbb R)).
\]
Let
\[
L^{1,a}(\mathbb R)=\bigcup_{\delta>0} L^1(\mathbb R,\nu_{\delta,a})
\]
and let
\[
\mathfrak B( L^{1,a}(\mathbb R),L^{\infty}(\mathbb R)) =\bigcap_{\delta>0} \mathcal L( L^1(\mathbb R,\nu_{\delta,a}),L^{\infty}(\mathbb R)).
\]
We note that $L^{1,a}(\mathbb R)$ and $\mathfrak B( L^{1,a}(\mathbb R),L^{\infty}(\mathbb R))$
are not normed spaces.
However we can suggest set theoretic structures between themselves as follows:
since  $L^1(\mathbb R,\nu_{\delta,a}) \subset L^{1,a}(\mathbb R)\subset L^1(\mathbb R)$
for any $\delta>0$,  it follows that
\[
\mathcal L( L^1(\mathbb R),L^{\infty}(\mathbb R)) 
\subset \mathfrak B ( L^{1,a}(\mathbb R),L^{\infty}(\mathbb R)) 
\subset  \mathcal L( L^1(\mathbb R,\nu_{\delta,a}),L^{\infty}(\mathbb R)).
\]

From  this observation and Theorem \ref{thm:main2},
we can obtain the following assertion.

\begin{theorem}\label{thm:main2020}
Let $q_0$, $F$,  $h$ and  $\Gamma_{q_0}$ be
as in Lemma \ref{lemma;1-infty}.
Then for all real $q$ with $|q|>q_0$,
the   AOVG`Feynman'I $J_{q}^{\mathrm{an}}(F;h)$ exists as an element of
$\mathfrak B(  L^{1,a}(\mathbb R),L^{\infty}(\mathbb R))$.
\end{theorem}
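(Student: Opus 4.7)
The plan is to recognize Theorem \ref{thm:main2020} as essentially a repackaging of Theorem \ref{thm:main2} in the language of the intersection space $\mathfrak{B}(L^{1,a}(\mathbb{R}),L^{\infty}(\mathbb{R}))$. By Theorem \ref{thm:main2}, for every real $q$ with $|q|>q_0$ and every $\delta>0$, the AOVG`Feynman'I $J_q^{\mathrm{an}}(F;h)$ exists in $\mathcal{L}(L^1(\mathbb{R},\nu_{\delta,a}), L^{\infty}(\mathbb{R}))$, and in each case its action is prescribed by the right-hand side of \eqref{eq:K-lambda} with $\lambda=-iq$, i.e.\ by $K_{-iq}(F;h)$. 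The key observation is that this prescription is completely free of $\delta$: for any $\psi\in L^{1,a}(\mathbb{R})=\bigcup_{\delta>0}L^1(\mathbb{R},\nu_{\delta,a})$, the function $(K_{-iq}(F;h)\psi)(\xi)$ depends only on $\psi$ and not on the particular exponential weight in which $\psi$ happens to be integrable.

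Consequently one and the same operator $K_{-iq}(F;h)$ serves as the AOVG`Feynman'I on every $L^1(\mathbb{R},\nu_{\delta,a})$; since it belongs to $\mathcal{L}(L^1(\mathbb{R},\nu_{\delta,a}),L^{\infty}(\mathbb{R}))$ for every $\delta>0$ (with operator-norm bound $\|K_{-iq}(F;h)\|_{\mathrm{o},\delta}\le M(|q|;h)\int_{C_{a,b}'[0,T]}k(q_0;w)\,d|f|(w)$ coming from \eqref{eq;observation02}), it belongs to the intersection $\bigcap_{\delta>0}\mathcal{L}(L^1(\mathbb{R},\nu_{\delta,a}),L^{\infty}(\mathbb{R}))$, which is precisely $\mathfrak{B}(L^{1,a}(\mathbb{R}),L^{\infty}(\mathbb{R}))$. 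A brief consistency check—that the limit defining $J_q^{\mathrm{an}}(F;h)\psi$ in Definition \ref{def:op-basic3} yields the same $L^\infty$ function regardless of which $L^1(\mathbb{R},\nu_{\delta,a})$ contains a given $\psi$—follows at once from the fact that the approximating operators $I_\lambda^{\mathrm{an}}(F;h)=K_\lambda(F;h)$ produced in Theorem \ref{thm:main1} are themselves defined by \eqref{eq:K-lambda} without reference to any weight.

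There is essentially no obstacle here: the analytic and operator-theoretic content has already been absorbed by Theorem \ref{thm:main2}, and what remains is only to invoke the set-theoretic inclusions displayed just before the statement of Theorem \ref{thm:main2020}, which immediately place the single well-defined operator $K_{-iq}(F;h)$ inside $\mathfrak{B}(L^{1,a}(\mathbb{R}),L^{\infty}(\mathbb{R}))$.
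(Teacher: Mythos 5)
Your proposal is correct and follows essentially the same route as the paper: the paper derives Theorem \ref{thm:main2020} directly from Theorem \ref{thm:main2} together with the set-theoretic inclusions displayed just before its statement, observing that the single $\delta$-independent operator $K_{-iq}(F;h)$ lies in $\mathcal{L}(L^1(\mathbb R,\nu_{\delta,a}),L^{\infty}(\mathbb R))$ for every $\delta>0$ and hence in the intersection $\mathfrak B(L^{1,a}(\mathbb R),L^{\infty}(\mathbb R))$. Your added consistency check that the limiting operator does not depend on the choice of $\delta$ is a sensible (if implicit in the paper) refinement, but the argument is the same.
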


\begin{remark}
If $b(t)=t$ and $a(t)\equiv 0$ on $[0,T]$, the function space $C_{a,b}[0,T]$ reduces to the classical Wiener space
$C_0[0,T]$. In this case, the three linear spaces  $L^1(\mathbb R)$, $L^1(\mathbb R, \nu_{\delta,0})$ and
$L^{1,0}(\mathbb R)$ coincide each other.
Furthermore, the three classes
$\mathcal L( L^1(\mathbb R),L^{\infty}(\mathbb R))$, 
$\mathfrak B ( L^{1,0}(\mathbb R),L^{\infty}(\mathbb R))$, and 
$\mathcal L( L^1(\mathbb R,\nu_{\delta,0}),L^{\infty}(\mathbb R))$
also coincide.
\end{remark}

\setcounter{equation}{0}
\section{Examples}

In this section, we present interesting examples to which our results  
in previous sections can be applied.

\par
Let $\mathcal{M}(\mathbb R)$ be the class of   complex-valued, 
countably additive measures on $\mathcal{B}(\mathbb R)$.
For $\eta\in \mathcal{M}(\mathbb R)$, the Fourier  transform 
$\widehat{\eta}$ of $\eta$
is a $\mathbb C$-valued function defined on $\mathbb R$, given by the formula
\[
\widehat{\eta}(u)=\int_{\mathbb R}\exp\{i uv\} d \eta(v).
\]

\par 
(1)   Let $w_0\in C_{a,b}'[0,T]$ and let $\eta \in \mathcal{M}(\mathbb R)$.
Define $F_1:C_{a,b}[0,T]\to \mathbb C$ by
\[
F_1(x)=\widehat{\eta}((w_0,x)^{\sim}).
\]
Define a function  $\phi:\mathbb R\to C_{a,b}'[0,T]$ 
by $\phi(v)=vw_0$.
Let $f=\eta\circ \phi^{-1}$.
It is quite clear  that $f$ is in $\mathcal{M}(C_{a,b}'[0,T])$
and is supported by $[w_0]$, the subspace of $C_{a,b}'[0,T]$ 
spanned by $\{w_0\}$.
Now  for s-a.e. $x\in C_{a,b}[0,T]$,
\[
\begin{aligned}
\int_{C_{a,b}'[0,T]}\exp\{i(w,x)^{\sim}\}d f(w) 
&=\int_{C_{a,b}'[0,T]}\exp\{i(w,x)^{\sim}\}d (\eta\circ\phi^{-1})(w)\\
&=\int_{\mathbb R}\exp\{i(\phi(v),x)^{\sim}\}d \eta (v)\\
&=\int_{\mathbb R}\exp\{i (w_0,x)^{\sim}v\}d \eta (v)\\
&=\widehat{\eta}((w_0,x)^{\sim}).
\end{aligned}
\]
Thus $F_1$ is an element of    $\mathcal{F}(C_{a,b}[0,T])$.

\par
Suppose that for a  fixed positive real number $q_0>0$,
\begin{equation}\label{eq:condi-ex}
\int_{\mathbb R}
\exp\big\{ (2q_0)^{-1/2}\|w_0\|_{C_{a,b}'}\|a\|_{C_{a,b}'}|v|\big\}
d |\eta|(v)<+\infty.
\end{equation}
It is easy to show that condition \eqref{eq:condi-ex} 
is equivalent to   condition \eqref{eq:condition-finite} with $f=\eta\circ \phi^{-1}$.
Thus, under condition \eqref{eq:condi-ex},
 $F_1$ is an element of $\mathcal{F}^{q_0}$ and so,
by Theorem \ref{thm:main2}, $J_{q}^{\mathrm{an}}(F_1;h)$
exists as an element of $\mathcal{L}(L^1(\mathbb R,\nu_{\delta,a}),L^{\infty}(\mathbb R))$
for all real $q$ with $|q|>q_0$, all $h\in C_{a,b}'[0,T]\backslash\{0\}$,
 and any $\delta>0$. Moreover  $J_{q}^{\mathrm{an}}(F_1;h)$ is 
an element of the space 
$\mathfrak{B}(  L^{1,a}(\mathbb R),L^{\infty}(\mathbb R))$ 
by Theorem \ref{thm:main2020}.

 \par
Next, we present more explicit examples of functionals in $\mathcal{F}(C_{a,b}[0,T])$
whose associated measures satisfy   condition \eqref{eq:condi-ex}.

(2)   Let $S:C_{a,b}'[0,T]\to C_{a,b}'[0,T]$ 
be the linear operator defined by  $Sw(t)=\int_0^t w(s)db(s)$.
Then the adjoint operator $S^*$ of $S$ is given by
\[
S^*w(t)=\int_0^t \big(w(T)-w(s)\big) d b(s)
\]
and for $x\in C_{a,b}[0,T]$, $(S^*b,x )^{\sim} =\int_0^T x(t) d b(t)$
by an integration by parts formula.

Given $\mathrm{m}$ and $\sigma^2$  in $\mathbb R$
with $\sigma^2>0$,
let $\eta_{\mathrm{m},\sigma^2}$ be the Gaussian measure
given by
\begin{equation}\label{eq:measure-01}
\eta_{\mathrm{m},\sigma^2}(B)
=( 2\pi \sigma^2)^{-1/2}
\int_{B}\exp\bigg\{- \frac{(v -\mathrm{m} )^2}{2\sigma^2}\bigg\}d v,
\quad B \in \mathcal{B}(\mathbb R).
\end{equation}
Then $\eta_{\mathrm{m},\sigma^2 }\in \mathcal{M}(\mathbb R )$
and
\[
\widehat {\eta_{\mathrm{m}, \sigma^2} }( u)
=\int_{\mathbb R}\exp\{ i uv\}d\eta_{\mathrm{m},\sigma^2}(v)
=\exp\bigg\{-\frac12 \sigma^2u^2+i\mathrm{m} u \bigg\}.
\]
The complex measure $\eta_{m,\sigma^2}$ given by equation \eqref{eq:measure-01}
satisfies condition  \eqref{eq:condi-ex}  for all $q_0>0$.
Thus we can apply the results in argument (1) 
to the functional $F_2:C_{a,b}[0,T]\to\mathbb C$ given by
\begin{equation}\label{eq:example-special}
\begin{aligned}
F_2(x)
&=\widehat {\eta_{\mathrm{m}, \sigma^2} }( (w_0,x)^{\sim})\\
&=\exp\bigg\{-\frac12 \sigma^2[(w_0,x)^{\sim}]^2+i\mathrm{m}(w_0,x)^{\sim}\bigg\}.
\end{aligned}\end{equation} 
For example, if we choose  $w_0=S^*b$, $\mathrm{m}=0$ and $\sigma^2=2$
in \eqref{eq:example-special},   
we have 
\[
F_3(x)
=\exp\big\{- [(S^*b,x)^{\sim}]^2 \big\}
=\exp\bigg\{-\bigg(\int_0^T x(t)d b(t)\bigg)^2\bigg\}
\]
for $x\in C_{a,b}[0,T]$.

We note that the functional $F_3$ is in $\cup_{q_0>0}\mathcal F^{q_0}$,
and so that for every nonzero real number $q$, the  
 AOVG`Feynman'I $J_{q}^{\mathrm{an}}(F_3;h)$ exists as an element of
$\mathfrak B(  L^{1,a}(\mathbb R),L^{\infty}(\mathbb R))$.

\par
(3)
Let $F_4:C_{a,b}[0,T]\to\mathbb C$ be given by
\[
F_4(x)=\exp\bigg\{i\int_0^Tx(t) d b(t)\bigg\}. 
\]
Then $F_4$ is a functional in $\mathcal F(C_{a,b}[0,T])$,
because
\[
F_4(x)
 =\exp\{i(S^*b,x)^{\sim}\} 
=\int_{C_{a,b}'[0,T]}\exp\{i(w,x)^{\sim} \}d \zeta (w)
\]
for s-a.e. $x\in C_{a,b}[0,T]$, where $\zeta$ is the Dirac  measure concentrated at $S^*b$ in $C_{a,b}'[0,T]$.
The   Dirac  measure $\zeta$ also satisfies condition 
\eqref{eq:condition-finite} with $f$ replaced with $\zeta$ for all $q_0>0$;
that is, $F_4 \in \cup_{q_0>0}\mathcal F^{q_0}$,
and so that for every nonzero real number $q$, the  
 AOVG`Feynman'I $J_{q}^{\mathrm{an}}(F_4;h)$ exists as an element of
$\mathfrak B(  L^{1,a}(\mathbb R),L^{\infty}(\mathbb R))$.

\section*{Statements and Declarations}

\subsection*{Funding}
This work was supported by the National Research Foundation of Korea(NRF) 
grant funded by the Korea government(MSIT) (No. 2021R1F1A1062770).

\subsection*{Conflicts of Interest} The author declares no conflict of interest.

\subsection*{Data Availability Statement}
No supplementary material is available.

\subsection*{Compliance with ethical standards}
 Not applicable.


\end{document}